\documentclass{article}

\usepackage[
  journal=JNCG,
  lang=british,
]{ems-journal}

\makeatletter
\renewcommand*\ps@titlepage{%
  \let\@oddfoot\@empty
  \let\@evenfoot\@empty
  \let\@oddhead\@empty
  \let\@evenhead\@empty
}
\makeatother

\usepackage{tikz-cd}
\usetikzlibrary{matrix}

\theoremstyle{plain}

\newtheorem{theorem}[subsubsection]{Theorem}

\newtheorem{lemma}[subsubsection]{Lemma}
\newtheorem{proposition}[subsubsection]{Proposition}

\theoremstyle{definition}

\newtheorem{definition}[subsubsection]{Definition}

\theoremstyle{remark}

\newtheorem{remark}[subsubsection]{Remark}

\numberwithin{equation}{section}

\newcommand{\QQ}{\mathbb{Q}}
\newcommand{\ZZ}{\mathbb{Z}}
\newcommand{\PP}{\mathbb{P}}

\newcommand{\HoH}{\mathrm{HH}}
\newcommand{\bl}{\mathrm{bl}}

\newcommand{\rk}{\mathrm{rk}}

\newcommand{\Hom}{\mathrm{Hom}}
\newcommand{\Ext}{\mathrm{Ext}}

\newcommand{\ch}{\mathrm{ch}}

\newcommand{\pr}{\mathrm{pr}}

\newcommand{\cB}{\mathcal{B}}

\newcommand{\cK}{\mathcal{K}}

\newcommand{\cP}{\mathcal{P}}

\makeatletter
\@ifl@t@r\fmtversion{2024-11-01}{%
  \def\H@refstepcounter#1{%
    \stepcounter{#1}%
    \edef\@currentcounter{#1}%
    \protected@edef\@currentlabel{%
      \csname p@#1\expandafter\endcsname\csname the#1\endcsname}%
  }%
}{}
\makeatother

\newcommand{\Perf}{\mathrm{Perf}}
\newcommand{\RHom}{\mathbf{R}\!\Hom}
\newcommand{\End}{\mathrm{End}}
\newcommand{\rad}{\mathrm{rad}}

\newcommand{\NHH}{\mathrm{NHH}}
\newcommand{\wt}{\mathrm{wt}}

\newcommand{\tensor}{\otimes}

\begin{document}

\title{Phantom categories on 3-vertex directed DG quivers}

\emsauthor{2}{
  \givenname{Yeqin}
  \surname{Liu}
  \mrid{1591694}
  \orcid{0000-0001-8231-7682}}{Y.~Liu}

\Emsaffil{2}{
  \department{Department of Mathematics}
  \organisation{University of Michigan}
  \rorid{00jmfr291}
  \address{530 Church St}
  \zip{48109}
  \city{Ann Arbor, MI}
  \country{USA}
  \affemail{yqnl@umich.edu}
}

\emsauthor{2}{
	\givenname{Yu}
	\surname{Shen}
	\mrid{}
	\orcid{0000-0001-5766-1596}}{Y.~Shen}

\Emsaffil{2}{
  \department{Department of Mathematics}
  \organisation{Florida State University}
  \rorid{05g3dte14}
  \address{1017 Academic Way}
  \zip{32306}
  \city{Tallahassee, FL}
  \country{USA}
  \affemail{ys26k@fsu.edu}
}

\classification[18G80, 16E40]{16E45}
\keywords{DG algebra, phantom category, Hochschild cohomology, exceptional collection}

\begin{abstract}
We construct infinitely many pairwise non-derived-Morita-equivalent phantom
categories $\cP_{m}, m\in\ZZ$ from concrete directed DG quivers with 3 vertices.
Moreover, there exists a fixed finite-dimensional DG algebra $C$ such that for each $m\in\ZZ$, there is a way to reassign cohomological gradings on $C$ (without changing the differential) to obtain a new DG algebra $C_{m}$ with $\Perf(C_{m})\cong \cP_{m}$.
\end{abstract}

\maketitle

\section{Introduction}\label{section-intro}
We work over a field $k$ of characteristic 0.
A nonzero smooth proper DG category $\cP$ is called a \emph{phantom category} if $K_{0}(\cP)=0$ and $\HoH_{\bullet}(\cP)=0$. The first examples of phantom categories were constructed in \cite{GO} and independently in \cite{BBS}, arising from
derived categories of surfaces of general type. Further examples on Dolgachev surfaces were constructed in \cite{ChoLeeDolgachev} and \cite{KarzhemanovKatzarkov}. Later, \cite{Krah} constructed a universal phantom on $\bl_{10}\PP^{2}$. Recent constructions on rational surfaces include \cite{LoomingPhantoms}, \cite{MaXiongYangNew,MaXiongYangEchoes}, and \cite{HeLiuShengZhou}.

The purpose of this paper is to construct infinitely many pairwise
non-derived-Morita-equivalent phantoms from a single finite-dimensional algebra. The beautiful constructions in the above literature are mostly abstract or on a relatively large category, which makes concrete computations difficult. Our motivation is to produce phantom categories from small, concrete quivers whose algebras and modules are explicit enough to be studied directly. Such an example would be helpful in understanding internal structures of phantom categories, such as t-structures, moduli theory of objects, and more. Eventually, we hope that this program leads to a mass production of concrete smooth and finite-dimensional DG algebras examples.

Our starting point is the observation that a directed 3-vertex quiver in \cite{BondalPolishchuk, KuznetsovJH, SungBondalQuiver} admits an ``unexpected'' exceptional object, distinct from the standard ones. There, this phenomenon does not continue to a second layer, but we believe that this is a failure of that particular ambient quiver, rather than an essential obstruction. This suggests choosing the ambient quiver more carefully so that the same phenomenon occurs for all three steps, producing an unexpected exceptional collection of length $3$.

Motivated by this observation, we construct the algebra $A$ defined in
(\ref{equation-def-A}) as a quotient of the path algebra of a
3-vertex directed quiver by relations. In $\Perf(A)$, there is an ``unexpected'' length-3 exceptional collection $\langle E,F,G \rangle$ defined in subsection \ref{section-modules}, different from the standard one. For each $m\in\ZZ$, there is a way to reassign cohomological gradings of $A$ and $\langle E,F,G \rangle$ in a compatible way. Write $A_{m}$ and $\langle E_{m},F_{m},G_{m}\rangle$ for the corresponding regradings. Then $\cP_{m}=\langle E_{m},F_{m},G_{m}\rangle^{\perp}\subset \Perf(A_{m})$ are pairwise distinct phantom categories. More specifically, we have the following main theorem.

\begin{theorem}\label{theorem-main}
For every $m\in\ZZ$, $\cP_{m}=\langle E_{m}, F_{m}, G_{m} \rangle^{\perp}$ is a phantom category. The Hochschild cohomology satisfies:
\begin{itemize}
\item If $m>0$, we have $\HoH^{j}(\cP_{m})=0$ for $j\notin[2-8m,5+6m]$, and
$$
\HoH^{2-8m}(\cP_{m})\cong k, 
\qquad
\HoH^{5+6m}(\cP_{m})\cong k^{2}.
$$
\item If $m=0$, then $\HoH^{j}(\cP_{0})=0$ for $j\notin[0,5]$, and
$$
\HoH^{0}(\cP_{0})\cong k,
\qquad
\HoH^{5}(\cP_{0})\cong k^{140}.
$$
\item If $m<0$, we have $\HoH^{j}(\cP_{m})=0$ for $j\notin[2+6m,5-8m]$, and
$$
\HoH^{2+6m}(\cP_{m})\cong k,
\qquad
\HoH^{5-8m}(\cP_{m})\cong k.
$$
\end{itemize}
In particular, the categories $\cP_{m}$ are pairwise non-derived-Morita-equivalent.
\end{theorem}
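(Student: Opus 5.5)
The argument has three parts: first that $\cP_m$ is a phantom, then the computation of $\HoH^\bullet(\cP_m)$, and finally the distinctness of the $\cP_m$.

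\emph{Phantom property.} Since $A_m$ is a finite-dimensional DG algebra given by a directed quiver with relations, it is smooth and proper. $\Perf(A_m)$ has a full exceptional collection consisting of the three indecomposable projectives $P_1,P_2,P_3$. So $K_0(A_m)\cong\ZZ^3$, and by additivity $\HoH_\bullet(A_m)\cong k^3$, concentrated in degree $0$. The objects $E_m,F_m,G_m$ form an exceptional collection (subsection \ref{section-modules}), so we have a semiorthogonal decomposition $\Perf(A_m)=\langle \cP_m, E_m,F_m,G_m\rangle$, where $\cP_m$ is admissible and hence smooth and proper. Additivity of $K_0$ and of Hochschild homology under semiorthogonal decompositions gives $K_0(A_m)=K_0(\cP_m)\oplus\ZZ^3$ and $\HoH_\bullet(A_m)=\HoH_\bullet(\cP_m)\oplus k^3$. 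We must show that the classes $[E_m],[F_m],[G_m]$ span $K_0(A_m)$. In the basis $[P_i]$ this means the $3\times 3$ matrix of their classes (read off from the explicit projective resolutions) has determinant $\pm1$. I expect this to be arranged by construction, and it is insensitive to regrading up to signs. Then $K_0(\cP_m)=0$. For Hochschild homology, $\HoH_\bullet(\cP_m)$ has total dimension $3-3=0$, hence vanishes. It remains to check $\cP_m\neq0$. The plan is to exhibit an explicit object of $\Perf(A_m)$ orthogonal to $E_m,F_m,G_m$, or to observe that $\HoH^\bullet(\cP_m)\neq0$; the latter will come out of the computation below anyway.

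\emph{Hochschild cohomology.} $\HoH^\bullet(\cP_m)$ is a derived invariant, and I would compute it via a DG endomorphism algebra model. Let $T=E_m\oplus F_m\oplus G_m$ and $B_m=\RHom(T,T)$, a directed DG algebra on three vertices. Taking the left orthogonal complement instead if convenient, $\cP_m$ is the Verdier quotient $\Perf(A_m)/\langle T\rangle$. The cleanest tool here is the gluing/Kuznetsov long exact sequence. Alternatively, $\cP_m$ is generated by an explicit object $Q$ (the projection of $A_m$ to $\cP_m$), and one computes $\HoH^\bullet$ of the finite-dimensional DG algebra $\End(Q)$ directly. The more practical route is the following. $A_m$ is a directed algebra, so $\HoH^\bullet(A_m)$ is computable from a minimal projective bimodule resolution (Bardzell-type) and is small. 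The complement $\langle T\rangle$ has $\HoH^\bullet(B_m)$ computable in the same way. The Hochschild cohomology of $\cP_m$ is then related by the distinguished triangle for the gluing of $\cP_m$ and $\langle T\rangle$ via the bimodule $\phi$:
\[
\HoH^\bullet(A_m)\to \HoH^\bullet(\cP_m)\oplus\HoH^\bullet(B_m)\to \Ext^\bullet_{\cP_m\text{-}B_m}(\phi,\phi).
\]
In practice I would instead compute directly. The plan is to write down $Q$ explicitly as a twisted complex of projectives, find the finite-dimensional DG algebra $\End(Q)$, and then compute $\HoH^\bullet(\End(Q))$ via a spectral sequence from the weight/length filtration by the quiver grading. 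The regrading parameter $m$ enters linearly into the degrees of arrows. For $m\neq0$ the bigrading separates contributions, so the extreme degrees $2-8m$, $5+6m$ (respectively $2+6m$, $5-8m$) are realized by explicit extremal cocycles, and the spectral sequence degenerates at the extremes. The case $m=0$ collapses the grading, so the dimension $140$ must be obtained by a direct linear-algebra count, e.g.\ an Euler characteristic argument in each weight combined with vanishing outside $[0,5]$.

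\emph{Distinctness and main obstacle.} $\HoH^\bullet$ is a derived Morita invariant, so the pair of extreme degrees, i.e.\ the amplitude, distinguishes the $\cP_m$. For $m>0$ the amplitude is $3+14m$, and for $m<0$ it is $3-14m$, so these coincide for $\pm m$. For $m$ and $-m$ we separate them using the lowest degree $2-8m$ versus $2-6m$, or the top dimensions $k^2$ versus $k$. The case $m=0$ differs from all others by the $k^{140}$. The main obstacle is the Hochschild cohomology computation itself. It requires an explicit, manageable model of $\cP_m$ and control of the spectral sequence at the boundary degrees, especially the vanishing outside the stated range, which I would prove by bounding the degrees of all terms in the bar-type complex using the $m$-linear grading.
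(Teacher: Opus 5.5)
Your outer frame agrees with the paper. It uses additivity of $K_0$ and $\HoH_\bullet$, deduces $\cP_m\neq0$ from $\HoH^\bullet(\cP_m)\neq0$, and gets distinctness from the extremal degrees; your comparison of $(2-8m,5+6m)$ with $(2-6m,5+8m)$ for $\pm m$ is correct. The unimodularity you only ``expect'' is automatic. Both $(P_3,P_2,P_1)$ and $(E_m,F_m,G_m)$ are exceptional, so both Euler matrices are unitriangular and the class matrix $M$ satisfies $\det(M)^2=1$. Alternatively, the dimension vectors $(3,2,1),(1,1,1),(1,2,4)$ have determinant $1$, and regrading only introduces even shifts.

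The genuine gap is the Hochschild cohomology, which carries all the numerical content of the theorem and which you leave as ``compute it''. The first missing idea is reducing the infinitely many computations to one. The paper takes an internally graded DG model $B$ of $\cP_0$ and observes that the mutations defining the projection commute with $T_m$, so $T_mB$ models $\cP_m$. It then proves $\HoH^d(T_mB)\cong\bigoplus_{p+2mw=d}\HoH^{p,w}(B)$ using a finite internally graded bimodule resolution; this needs the lemma that a graded module which is perfect after forgetting the grading is graded-perfect. Every assertion of the theorem is then read off from the bigraded support of $\HoH^{p,w}(\cP_0)$:
\begin{itemize}
\item it lies in $0\le p\le5$, $-4\le w\le3$;
\item it vanishes for $p<2$, $w\neq0$;
\item its corner groups are $\HoH^{2,-4}\cong\HoH^{2,3}\cong\HoH^{5,-4}\cong k$ and $\HoH^{5,3}\cong k^{2}$.
\end{itemize}
Your phrases ``the bigrading separates contributions'' and ``explicit extremal cocycles'' presuppose exactly this table without a way to obtain it. Bounding degrees of terms in a bar-type cochain complex for $\End(Q)$ only gives chain-level bounds. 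The sharp endpoints depend on where the cohomology lives; for example, $2-8m$ needs $\HoH^{p,w}=0$ for $p<2$, $w<0$.

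The second missing idea is an effective way to compute $\HoH^{p,w}(\cP_0)$. You mention Kuznetsov's sequence, but the triangle you write is the gluing triangle. It is correct, but it involves $\Ext_{\cP_m\text{-}B_m}(\phi,\phi)$, which is no easier than the original problem. For the direct computation of $\HoH^\bullet(\End(Q))$ you give no reason for the degeneration you assert. The paper uses Kuznetsov's normal Hochschild triangle $\NHH^\bullet(\cB,A)\to\HoH^\bullet(A)\to\HoH^\bullet(\cB^\perp)$, whose spectral sequence has $E_1$-terms built only from $\Ext^\bullet(X_i,X_j)$ and $\Ext^\bullet(X_j,S_A^{-1}X_i)$. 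The forward Exts among $E,F,G$ sit in degree $2$ and the inverse-Serre Exts in degree $4$, so the nonzero terms are at $(0,4)$, $(-1,6)$, $(-2,8)$ and the sequence degenerates. Consequently:
\begin{itemize}
\item $\HoH^{p,\ast}(\cP_0)=\HoH^{p,\ast}(A)$ for $p\le2$, computed from the three-term complex $k^3\to\End(U)\oplus\End(V)\to\Hom(R,W)$;
\item $\HoH^{p,\ast}(\cP_0)=\NHH^{p+1,\ast}(\cB,A)$ for $p=3,4,5$, with internal characters given by products of the $f_{XY}$ and $v_{XY}$.
\end{itemize}
In particular $\HoH^{5}(\cP_0)\cong\Ext^2(E,F)\otimes\Ext^2(F,G)\otimes\Ext^4(G,S_A^{-1}E)$, of dimension $5\cdot7\cdot4=140$. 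Your proposed route to $140$ through a weightwise Euler characteristic cannot work. $\HoH^{3,w}$, $\HoH^{4,w}$ and $\HoH^{5,w}$ are nonzero for the same weights $w$, so alternating sums in each weight do not isolate $\HoH^{5}$.
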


Note that the DG algebras $A_{m}$ all have the same underlying algebra. This is not a common feature that works for arbitrary DG algebras: it imposes constraints on both the ambient quiver and the exceptional collection. The particular algebra and modules in this paper are not the limit of this method. In principle the same construction can be applied to other carefully chosen internally graded (DG) quivers with unexpected exceptional collections. This provides a general strategy for producing more families of residual categories.

\subsection*{Acknowledgments}

Much credit should be given to Alexander Kuznetsov:
We learned that he had shown that Krah's phantom \cite{Krah} can be realized on an ambient category with a full exceptional collection of length $3$.
We thank Amal Mattoo and Alexander Perry for independently bringing this
fact to our attention. This gave us confidence to look for phantom
residuals on concrete 3-vertex directed DG quivers (instead of higher lengths).

\subsection*{AI disclosure}

This project began in November 2025. A substantial portion of the work was
carried out by the authors without the use of AI. The mathematical ideas are designed by the authors.  We used AI tools (including
ChatGPT 5.6 Sol and ChatGPT 6 Astra) to assist with numerical ansatz, computations, language editing and grammatical corrections. The final paper presentation is carefully written by the authors. The authors
take full responsibility for the correctness of the paper.

\section{Construction of algebra and exceptional modules}\label{section-fixed-data}

In this section, we construct the path algebra $A$ of a 3-vertex directed quiver with relations, together with
an exceptional triple $(E,F,G)$ that is distinct from the standard one. 

We equip both the algebra and the modules
with an additional $\ZZ$-grading (distinct from the cohomological grading), which we call the \emph{internal grading}.
For a homogeneous element $x$, we write $\wt(x)$ for its internal degree.
For a finite internally graded vector space
$
M=\bigoplus_{w} M_{w},
$
we define its internal graded character by
$$
\ch(M)=\sum_{w} \dim_{k}(M_{w})q^{w}.
$$
The internal character is an enrichment of $\dim_{k}(M)$: we have $\ch(M)|_{q=1}=\dim_{k}(M)$.

\subsection{The quiver algebra}\label{section-algebra}
First, we construct the algebra $A$ together with an internal
$\ZZ$-grading from a 3-vertex quiver with homogeneous quadratic relations in this subsection.

Consider the quiver $Q$:
$$
1 \overset{a_{1},\ldots,a_{4}}{\longrightarrow} 2
  \overset{b_{1},\ldots,b_{5}}{\longrightarrow} 3.
$$
We write $b_{j}a_{i}$ for the length 2 path obtained by first applying $a_{i}$
and then $b_{j}$. Now equip $kQ$ with the internal grading by
$$
\wt(a_{1},a_{2},a_{3},a_{4})=(0,0,1,-1),
\qquad
\wt(b_{1},b_{2},b_{3},b_{4},b_{5})=(0,0,-1,-1,-2).
$$
We define an ideal $(\rho_{1}, \cdots, \rho_{7})\subset kQ$ by the following generators
$$
\begin{array}{rl}
\rho_{1}&=-b_{1}a_{1}+2b_{1}a_{2}+b_{2}a_{1}-2b_{2}a_{2}+2b_{3}a_{3}+2b_{4}a_{3},\\
\rho_{2}&=2b_{1}a_{2}+b_{2}a_{1}-3b_{2}a_{2}-2b_{3}a_{3},\\
\rho_{3}&=-b_{1}a_{3}+2b_{2}a_{3},\\
\rho_{4}&=-2b_{1}a_{4}-2b_{3}a_{1}+b_{3}a_{2}+b_{4}a_{2}-2b_{5}a_{3},\\
\rho_{5}&=b_{1}a_{4}-b_{2}a_{4}-2b_{3}a_{1}+b_{3}a_{2}-2b_{5}a_{3},\\
\rho_{6}&=b_{1}a_{4}-2b_{3}a_{1}-2b_{4}a_{1}+b_{4}a_{2},\\
\rho_{7}&=2b_{3}a_{4}-b_{4}a_{4}+b_{5}a_{1}+b_{5}a_{2}.
\end{array}
$$
Note that $\wt(\rho_{1},\cdots, \rho_{7})=(0,0,1,-1,-1,-1,-2)$, so $\rho_{i}$ are (internal graded) homogeneous.
Throughout this paper, define the internal graded algebra
\begin{equation}\label{equation-def-A}
    A=kQ/(\rho_{1},\ldots,\rho_{7}).
\end{equation}
In the space of length-2 paths spanned by $
(b_{1}a_{1},\ldots,b_{1}a_{4},b_{2}a_{1},\ldots,b_{5}a_{4})
$, the coefficient matrix of
$\rho_{1},\ldots,\rho_{7}$ has rank $7$. Let $J=\rad(A)$ be the Jacobson radical of $A$.  Then we have
$$
\dim_{k} A=\dim_{k}(A/J)+\dim_{k}(J/J^{2})+\dim_{k}(J^{2})=3+(4+5)+(4\cdot 5-7)=25.
$$
In this paper, we fix the following vector spaces:
\begin{equation}\label{equation-UVW}
\begin{array}{rl}
    U=\langle a_{1},\ldots,a_{4}\rangle, 
    \quad &
    V=\langle b_{1},\ldots,b_{5}\rangle,
\\
    R=\langle \rho_{1},\ldots,\rho_{7}\rangle\subset V\tensor U, 
    \quad &
    W=(V\tensor U)/R.
\end{array}
\end{equation}
Their internal graded characters are
\begin{align*}
\ch(U)&=2+q+q^{-1},\\
\ch(V)&=2+2q^{-1}+q^{-2},\\
\ch(R)&=2+q+3q^{-1}+q^{-2},\\
\ch(W)&=\ch(U)\ch(V)-\ch(R)
     =q+4+4q^{-1}+3q^{-2}+q^{-3}.
\end{align*}

\subsection{The graded modules $E,F,G$}\label{section-modules}

The goal of this subsection is to construct 3 explicit internally graded $A$-modules $E,F,G$, which will be shown in the next subsection to form the unexpected exceptional triple used throughout the paper.

Recall that
a left $A$-module $X$ is a representation
$$
X_{1}\overset{A_{1}^{X},\ldots,A_{4}^{X}}{\longrightarrow}
X_{2}\overset{B_{1}^{X},\ldots,B_{5}^{X}}{\longrightarrow}X_{3},
$$
where $A_{i}^{X}:X_{1}\to X_{2}$ and $B_{j}^{X}:X_{2}\to X_{3}$ are the actions of
$a_{i}$ and $b_{j}$, respectively. For a representation $X$, recall that the dimension vector
$$
\overrightarrow{\dim}(X)=\bigl(\dim_{k} X_{1},\dim_{k} X_{2},\dim_{k} X_{3}\bigr).
$$
For our construction, take
$$
\overrightarrow{\dim}(E)=(3,2,1),\qquad
\overrightarrow{\dim}(F)=(1,1,1),\qquad
\overrightarrow{\dim}(G)=(1,2,4).
$$
These choices are made so that their numerical classes in $K_{0}$ have numerical semiorthogonality. This can be made explicit by (\ref{equation-euler-form}), and for our particular construction this is checked in Remark \ref{remark-upper-triangular}. After the dimension vectors are fixed, we will choose the internal degrees and the matrices below. They are chosen so that the seven quadratic relations are homogeneous and satisfied, while the resulting $\Ext$ complexes have the rank conditions needed for making the triple an exceptional collection. This entire construction is one explicit solution to these constraints, rather than any canonical choice.

We next choose homogeneous bases for the three modules, with respect to the internal grading. The internal degrees
of basis vectors at the three vertices are given by
$$
\begin{array}{c|ccc}
 & X_{1} & X_{2} & X_{3} \\ \hline
E & (1,0,2) & (0,1) & (0) \\
F & (0) & (0) & (0) \\
G & (0) & (0,1) & (1,0,0,-1)
\end{array}
$$
With respect to these bases, define $E$ by
\begin{align*}
A_{1}^{E}=\begin{pmatrix}0&1&0\\-1&0&0\end{pmatrix},\quad
A_{2}^{E}=\begin{pmatrix}0&1&0\\-2&0&0\end{pmatrix},\quad
A_{3}^{E}=\begin{pmatrix}0&0&0\\0&0&0\end{pmatrix},\quad 
A_{4}^{E}=\begin{pmatrix}-2&0&0\\0&0&2\end{pmatrix},\\
B_{1}^{E}=\begin{pmatrix}-1&0\end{pmatrix},\quad 
B_{2}^{E}=\begin{pmatrix}-1&0\end{pmatrix},\quad 
B_{3}^{E}=\begin{pmatrix}0&-1\end{pmatrix}, \quad 
B_{4}^{E}=\begin{pmatrix}0&-2\end{pmatrix},\quad 
B_{5}^{E}=\begin{pmatrix}0&0\end{pmatrix}.
\end{align*}
Define $F$ by
\begin{align*}
&A_{1}^{F}=(-2),\qquad 
A_{2}^{F}=(-1),\qquad 
A_{3}^{F}=A_{4}^{F}=(0), \\
&B_{1}^{F}=(-1),\qquad 
B_{2}^{F}=(-2),\qquad 
B_{3}^{F}=B_{4}^{F}=B_{5}^{F}=(0).
\end{align*}
Finally, define $G$ by
\begin{align*}
& A_{1}^{G}=\begin{pmatrix}-2\\0\end{pmatrix},\quad
A_{2}^{G}=\begin{pmatrix}0\\0\end{pmatrix},\quad
A_{3}^{G}=\begin{pmatrix}0\\-1\end{pmatrix},\quad
A_{4}^{G}=\begin{pmatrix}0\\0\end{pmatrix},\\
& B_{1}^{G}=\begin{pmatrix}0&2\\1&0\\0&0\\0&0\end{pmatrix},\quad
B_{2}^{G}=\begin{pmatrix}0&1\\1&0\\1&0\\0&0\end{pmatrix},\quad
B_{3}^{G}=\begin{pmatrix}0&0\\0&1\\0&1\\1&0\end{pmatrix},
\quad B_{4}^{G}=\begin{pmatrix}0&0\\0&-1\\0&-2\\-1&0\end{pmatrix},\quad
B_{5}^{G}=\begin{pmatrix}0&0\\0&0\\0&0\\0&-2\end{pmatrix}.
\end{align*}
All these matrices are homogeneous for the internal grading.  Moreover,
they satisfy the 7 defining relations of $A$:
$$
\sum_{j,i}(\rho_{\ell})_{ji}B_{j}^{X}A_{i}^{X}=0
\qquad
(X=E,F,G,\;1\leq\ell\leq7).
$$
So $E,F,G$ are well-defined internally graded $A$-modules.

\subsection{Graded Ext groups}\label{section-exceptional}
In this subsection we verify that $(E,F,G)$ is an exceptional triple and record the internally graded
$\Ext$ groups that will be used later.

To obtain $A$-projective resolutions of $E, F, G$, we first obtain a projective $A^{e}$-resolution of $A$.
Let $e_{i}$ be the idempotent of vertex $i$, and set
$ S=ke_{1}\oplus ke_{2}\oplus ke_{3}$.  Recall that $U,V,W$
are defined in (\ref{equation-UVW}). We have $J=U\oplus V\oplus W$. Thus
$$
U=e_{2}Je_{1},\quad
V=e_{3}Je_{2},\quad
W=e_{3}Je_{1}; \qquad
J^{2}=W, \quad
J^{3}=0.
$$
It follows that
$
J\tensor_{S} J\cong V\tensor_{S} U.
$
Multiplication in $A$ induces a surjective map
$$
\mu:V\tensor_{S} U\longrightarrow W,
\qquad
b_{j}\tensor a_{i}\longmapsto b_{j}a_{i}.
$$
By definition,
$\ker(\mu)=R$, and we have the exact sequence
\begin{equation}\label{equation-relation-sequence}
0\longrightarrow R
\longrightarrow V\tensor_{S} U
\xrightarrow{\mu} W
\longrightarrow0.
\end{equation}
Since
$
J^{\otimes_{S} 3}=0
$,
the relative bar resolution of $A$ over $S$ terminates after the
second term:
\begin{equation}\label{equation-relative-bar}
0\longrightarrow
A\tensor_{S} J\tensor_{S} J\tensor_{S} A
\longrightarrow
A\tensor_{S} J\tensor_{S} A
\longrightarrow
A\tensor_{S} A
\longrightarrow
A
\longrightarrow0.
\end{equation}
Now we can simplify this resolution. 
Since $S$ is semisimple,
(\ref{equation-relation-sequence}) admits a graded $S$-bimodule splitting.
Choose a graded $S$-bimodule $R'$ such that
$
V\tensor_{S} U=R\oplus R'
$
and such that the restriction of $\mu$ induces an isomorphism
$
\mu|_{R'}:R'\xrightarrow{\sim}W.
$
Using
$$
J\tensor_{S} J\cong R\oplus R'
\qquad\text{and}\qquad
J=(U\oplus V)\oplus W,
$$
the first two terms of (\ref{equation-relative-bar}) decompose as
\begin{equation}
\begin{array}{rl}
A\tensor_{S} J\tensor_{S} J\tensor_{S} A
\cong &
(A\tensor_{S} R\tensor_{S} A)
\oplus
(A\tensor_{S} R'\tensor_{S} A), \\
A\tensor_{S} J\tensor_{S} A
\cong &
\bigl(A\tensor_{S}(U\oplus V)\tensor_{S} A\bigr)
\oplus
(A\tensor_{S} W\tensor_{S} A).
\end{array}
\end{equation}
With respect to these decompositions, the bar differential has a component
$$
A\tensor_{S} R'\tensor_{S} A
\longrightarrow
A\tensor_{S} W\tensor_{S} A
$$
induced by the isomorphism $\mu|_{R'}:R'\xrightarrow{\sim}W$.  Hence this component is an isomorphism, and cancelling them in (\ref{equation-relative-bar})
gives the resolution
\begin{equation}\label{equation-minimal-bimodule-resolution}
0\longrightarrow
A\tensor_{S} R\tensor_{S} A
\longrightarrow
A\tensor_{S}(U\oplus V)\tensor_{S} A
\longrightarrow
A\tensor_{S} A
\longrightarrow
A
\longrightarrow0.
\end{equation}

\begin{lemma}\label{lemma-euler-form}
Let $X,Y$ be finite-dimensional left $A$-modules with
$$
\overrightarrow{\dim}(X)=(x_{1},x_{2},x_{3}),
\qquad
\overrightarrow{\dim}(Y)=(y_{1},y_{2},y_{3}).
$$
The Euler pairing is given by
\begin{equation}\label{equation-euler-form}
\begin{array}{rl}
\chi(X,Y)
:=&
\sum_{i\geq0}(-1)^{i}\dim_{k}\Ext_{A}^{i}(X,Y)\\
=&
x_{1}y_{1}+x_{2}y_{2}+x_{3}y_{3}
-4x_{1}y_{2}-5x_{2}y_{3}+7x_{1}y_{3}.
\end{array}
\end{equation}
\end{lemma}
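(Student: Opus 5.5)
The plan is to apply $\Hom_{A}(-,Y)$ to the projective resolution of $X$ obtained from the bimodule resolution (\ref{equation-minimal-bimodule-resolution}), and to read off the Euler characteristic from the dimensions of the terms. Tensoring (\ref{equation-minimal-bimodule-resolution}) on the right with $X$ over $A$ gives
$$
0\longrightarrow A\tensor_{S} R\tensor_{S} X
\longrightarrow A\tensor_{S}(U\oplus V)\tensor_{S} X
\longrightarrow A\tensor_{S} X
\longrightarrow X\longrightarrow 0.
$$
This sequence stays exact because (\ref{equation-minimal-bimodule-resolution}) is a complex of bimodules that are projective as right $A$-modules, resolving $A$, and is therefore split exact as a sequence of right $A$-modules. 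Each term $A\tensor_{S} M$, with $M$ a finite-dimensional left $S$-module, is a projective left $A$-module, since it is a direct sum of copies of the $Ae_{i}$. So this is a finite projective resolution of $X$, of length at most $2$. In particular $\Ext^{i}_{A}(X,Y)=0$ for $i>2$, and the sum defining $\chi$ is finite.

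Next, I use the adjunction $\Hom_{A}(A\tensor_{S} M,Y)\cong\Hom_{S}(M,Y)$. Applying $\Hom_{A}(-,Y)$ to the resolution gives a complex
$$
0\to\Hom_{S}(X,Y)\to\Hom_{S}((U\oplus V)\tensor_{S} X,Y)\to\Hom_{S}(R\tensor_{S} X,Y)\to0
$$
whose cohomology is $\Ext^{\bullet}_{A}(X,Y)$. Hence $\chi(X,Y)$ equals the alternating sum of the dimensions of these three terms.

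It remains to compute each dimension from the vertex decomposition, using $U=e_{2}Ue_{1}$, $V=e_{3}Ve_{2}$ and $R\subset e_{3}(V\tensor U)e_{1}$:
\begin{itemize}
\item $\dim\Hom_{S}(X,Y)=\sum_{i}x_{i}y_{i}$;
\item $U\tensor_{S}X\cong U\tensor_{k}X_{1}$ sits at vertex $2$, which contributes $4x_{1}y_{2}$;
\item $V\tensor_{S}X\cong V\tensor_{k}X_{2}$ sits at vertex $3$, which contributes $5x_{2}y_{3}$;
\item $R\tensor_{S}X\cong R\tensor_{k}X_{1}$ sits at vertex $3$, which contributes $7x_{1}y_{3}$.
\end{itemize}
The alternating sum gives exactly (\ref{equation-euler-form}).

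The only step needing care is the exactness of the tensored resolution, i.e.\ that $-\tensor_{A}X$ preserves exactness of (\ref{equation-minimal-bimodule-resolution}). This holds because each term of (\ref{equation-minimal-bimodule-resolution}) is free as a right $A$-module and the complex is bounded and acyclic, so it is contractible as a complex of right $A$-modules. The rest is bookkeeping with the idempotents.
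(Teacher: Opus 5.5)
Your proposal is correct and follows essentially the same route as the paper: tensor the minimal bimodule resolution with $X$, apply $\Hom_{A}(-,Y)$, and read off $\chi(X,Y)$ from the dimensions of the three terms $\cK^{0},\cK^{1},\cK^{2}$ via the vertex decomposition. Your justification that $-\tensor_{A}X$ preserves exactness, via contractibility of the resolution as a complex of right $A$-modules, fills in a step the paper leaves implicit. One small correction: the terms are projective as right $A$-modules, being sums of copies of the $e_{i}A$, rather than free. Projectivity is all the contractibility argument needs.
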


\begin{proof}
Applying $-\otimes_{A}X$ to 
(\ref{equation-minimal-bimodule-resolution}) gives
\begin{equation}\label{equation-resolution-X}
0\longrightarrow
A\tensor_{S} R\tensor_{S} X
\longrightarrow
A\tensor_{S}(U\oplus V)\tensor_{S} X
\longrightarrow
A\tensor_{S} X
\longrightarrow
X
\longrightarrow0.
\end{equation}
Since $S$ is semisimple, every finite-dimensional $S$-module is projective,
and each term in (\ref{equation-resolution-X}) is therefore a finite direct sum of
the projective left $A$-modules $Ae_{1},Ae_{2},Ae_{3}$. So
(\ref{equation-resolution-X}) is a projective resolution of $X$. Applying $\Hom_{A}(-,Y)$ to (\ref{equation-resolution-X}) gives 
\begin{equation}\label{equation-Ext-complex}
\cK^{0}(X,Y)\xrightarrow{d^{0}}
\cK^{1}(X,Y)\xrightarrow{d^{1}}
\cK^{2}(X,Y).
\end{equation}
The cohomology of (\ref{equation-Ext-complex}) is $\Ext_{A}^{\bullet}(X,Y)$. Since
$A\tensor_{S} X
\cong
\bigoplus_{v=1}^{3} Ae_{v}\tensor_{k} X_{v}$,
we have
$$
\cK^{0}(X,Y)
\cong
\bigoplus_{v=1}^{3}\Hom_{k}(X_{v},Y_{v}).
$$
Next, using
$
U=e_{2}Ue_{1},$ $
V=e_{3}Ve_{2},
$
together with $\dim_{k}U=4$ and $\dim_{k}V=5$, we obtain
$$
\cK^{1}(X,Y)
\cong
\Hom_{k}(X_{1},Y_{2})^{\oplus4}
\oplus
\Hom_{k}(X_{2},Y_{3})^{\oplus5}.
$$
Finally, since
$
R=e_{3}Re_{1}$
 and
$\dim_{k}R=7,
$
we have
$$
\cK^{2}(X,Y)
\cong
\Hom_{k}(X_{1},Y_{3})^{\oplus7}.
$$
Hence we have the dimensions
\begin{align*}
\dim_{k} \cK^{0}(X,Y)
&=x_{1}y_{1}+x_{2}y_{2}+x_{3}y_{3},\\
\dim_{k} \cK^{1}(X,Y)
&=4x_{1}y_{2}+5x_{2}y_{3},\\
\dim_{k} \cK^{2}(X,Y)
&=7x_{1}y_{3}.
\end{align*}
Therefore, we obtain
$$\chi(X,Y) =x_{1}y_{1}+x_{2}y_{2}+x_{3}y_{3}
-4x_{1}y_{2}-5x_{2}y_{3}+7x_{1}y_{3}.$$
\end{proof}

\begin{lemma}\label{lemma-exceptional-triple}
The triple $(E,F,G)$ is an exceptional collection in $\Perf(A)$.
\end{lemma}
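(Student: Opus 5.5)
We will verify exceptionality directly, using the three-term complex $\cK^{\bullet}(X,Y)$ from (\ref{equation-Ext-complex}) in the proof of Lemma \ref{lemma-euler-form}. Since $A$ has finite global dimension (by (\ref{equation-resolution-X}) every finite-dimensional module has a projective resolution of length $2$), each of $E,F,G$ lies in $\Perf(A)$. It remains to show two things: $\Ext_{A}^{\bullet}(X,X)=k$ concentrated in degree $0$ for $X\in\{E,F,G\}$, and $\Ext_{A}^{\bullet}(Y,X)=0$ for the backward pairs $(Y,X)\in\{(F,E),(G,E),(G,F)\}$.

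First we compute the Euler pairings from (\ref{equation-euler-form}) with dimension vectors $(3,2,1),(1,1,1),(1,2,4)$. We get
$$\chi(E,E)=9+4+1-24-10+21=1,\qquad \chi(F,F)=1+1+1-4-5+7=1,$$
$$\chi(G,G)=1+4+16-8-40+28=1.$$
For the backward pairs we get
$$\chi(F,E)=3+2+1-8-5+7=0,\qquad \chi(G,E)=3+4+4-8-10+7=0,$$
$$\chi(G,F)=1+2+4-4-10+7=0.$$
This is the numerical semiorthogonality mentioned in the text. Because $\Ext^{\bullet}$ is the cohomology of a three-term complex, it suffices to establish the correct ranks of $d^{0}$ and $d^{1}$. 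Concretely:
\begin{itemize}
\item for $X=Y$, we need $\dim\ker d^{0}=1$ and $d^{1}$ surjective; the Euler characteristic then forces $\Ext^{1}=0$;
\item for the backward pairs, we need $d^{0}$ injective and $d^{1}$ surjective; then $\chi=0$ forces $\Ext^{1}=0$.
\end{itemize}

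The map $d^{0}$ sends $(\phi_{1},\phi_{2},\phi_{3})$ to the collection $(\phi_{2}A_{i}^{X}-A_{i}^{Y}\phi_{1},\ \phi_{3}B_{j}^{X}-B_{j}^{Y}\phi_{2})$, so $\ker d^{0}=\Hom_{A}(X,Y)$. The map $d^{1}$ sends $(\alpha_{i},\beta_{j})$ to $\bigl(\sum_{i,j}(\rho_{\ell})_{ji}(\beta_{j}A_{i}^{X}+B_{j}^{Y}\alpha_{i})\bigr)_{\ell}$, up to sign conventions. Its surjectivity is equivalent to $\Ext^{2}(X,Y)=0$, which by Serre-type duality is a condition on $\Hom$ from $Y$ into a shifted twist of $X$. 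It is simplest, though, to check it directly as a rank condition.

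All maps respect the internal grading, so each complex $\cK^{\bullet}(X,Y)$ splits into internal weight pieces. This is the main tool for keeping the linear algebra small. We will tabulate the characters of $\cK^{0},\cK^{1},\cK^{2}$ weight by weight, using the weights of the bases and of $U,V,R$. Then we will check injectivity or surjectivity on each weight piece. Many pieces vanish for degree reasons, for instance weights where $\cK^{2}$ is zero, or where $\cK^{0}$ is zero so that injectivity of $d^{0}$ is automatic. For $\Hom$ we will argue by hand:
\begin{itemize}
\item $\Hom_{A}(F,E)$, $\Hom_{A}(G,E)$ and $\Hom_{A}(G,F)$ vanish by chasing the equations $\phi_{2}A_{i}^{X}=A_{i}^{Y}\phi_{1}$ and $\phi_{3}B_{j}^{X}=B_{j}^{Y}\phi_{2}$ in each weight;
\item $\End(X)=k$ follows by showing that every endomorphism is scalar. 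For $F$ this is immediate. For $E$ and $G$ we will use the homogeneous components, e.g.\ the generic-looking maps $B^{G}_{j}$ force $\phi_{2}$ to be scalar.
\end{itemize}

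The main obstacle is the surjectivity of $d^{1}$, i.e.\ the vanishing of $\Ext^{2}$. This is especially demanding for $(G,G)$ and $(G,E)$: there $\cK^{2}$ has dimensions $28$ and $21$, and $d^{1}$ is a sizable matrix built from the seven $\rho_{\ell}$. We will first try the weight decomposition: $\cK^{2}(X,Y)=\Hom(X_{1},Y_{3})\otimes R^{*}$-type pieces are split by weight, so each piece needs only a few explicit rank computations. Should a piece resist, we will instead compute $\Ext^{2}$ as the dual of a $\Hom$ via the explicit form of the minimal resolution (\ref{equation-resolution-X}). In that form, surjectivity of $d^{1}$ amounts to the non-existence of a nonzero element of $\Hom_{k}(X_{1},Y_{3})^{*}\otimes R$ annihilated by the transpose conditions, which we can also check weight by weight.
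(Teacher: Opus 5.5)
Your proposal is correct and is essentially the paper's argument: the paper also writes out $d^{0},d^{1}$ on the three-term complex $\cK^{\bullet}(X,Y)$ and reads off the ranks recorded in (\ref{table-ext}), and your use of $\chi$ to eliminate $\Ext^{1}$, together with the internal-weight splitting, is just an efficient way to organize that same rank check. One harmless slip: $\dim\cK^{2}(G,E)=7\cdot\dim G_{1}\cdot\dim E_{3}=7$, not $21$; the $21$-dimensional case is $(E,E)$, so the sizable surjectivity checks are really $(G,G)$ and $(E,E)$.
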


\begin{proof}
We just need to make the differentials in (\ref{equation-Ext-complex}) explicit and then compute the cohomology.
For $\phi=(\phi_{1},\phi_{2},\phi_{3})\in \cK^{0}(X,Y)$, the first differential in
(\ref{equation-Ext-complex}) is
$$(d^{0}\phi)_{a_{i}}=A_{i}^{Y}\phi_{1}-\phi_{2}A_{i}^{X}, \qquad
(d^{0}\phi)_{b_{j}}=B_{j}^{Y}\phi_{2}-\phi_{3}B_{j}^{X}. $$
Recall the relations $\rho_{1},\ldots,\rho_{7}$ from
Section~\ref{section-algebra}.
Writing
$\rho_{\ell}=\sum_{j,i}(\rho_{\ell})_{ji}b_{j}a_{i}$,
the second differential is
$$
(d^{1}\xi)_{\ell}
=
\sum_{j,i}(\rho_{\ell})_{ji}
\bigl(B_{j}^{Y}\xi_{a_{i}}+\xi_{b_{j}}A_{i}^{X}\bigr).
$$
Using the explicit matrices defining $E,F,G$, direct rank computations give
\begin{align*}
&\Ext_{A}^{\bullet}(E,E)
\cong
\Ext_{A}^{\bullet}(F,F)
\cong
\Ext_{A}^{\bullet}(G,G)
\cong k
\\
&\Ext_{A}^{\bullet}(F,E)
=
\Ext_{A}^{\bullet}(G,E)
=
\Ext_{A}^{\bullet}(G,F)
=
0.
\end{align*}
\end{proof}

\begin{remark}\label{remark-upper-triangular}
For $(X_{1}, X_{2}, X_{3})=(E, F, G)$,
Lemma~\ref{lemma-euler-form} computes the Euler matrix
$$
\bigl(\chi(X_{i},X_{j})\bigr)_{i,j=1}^{3}
=
\begin{pmatrix}
1&5&31\\
0&1&7\\
0&0&1
\end{pmatrix}.
$$
\end{remark}

For later use, we record the dimensions and ranks of the Ext complexes for
all pairs among $E,F,G$:
\begin{center}
\begin{equation}\label{table-ext}
\small
\begin{tabular}{c|c|c|c}
$(X,Y)$ & $(\dim \cK^{0},\dim \cK^{1},\dim \cK^{2})$
& $(\rk d^{0},\rk d^{1})$
& $(\dim\Ext^{0},\dim\Ext^{1},\dim\Ext^{2})$\\ \hline
$(E,E)$&(14,34,21)&(13,21)&(1,0,0)\\
$(F,F)$&(3,9,7)&(2,7)&(1,0,0)\\
$(G,G)$&(21,48,28)&(20,28)&(1,0,0)\\
$(F,E)$&(6,13,7)&(6,7)&(0,0,0)\\
$(G,E)$&(11,18,7)&(11,7)&(0,0,0)\\
$(G,F)$&(7,14,7)&(7,7)&(0,0,0)\\
$(E,F)$&(6,22,21)&(6,16)&(0,0,5)\\
$(E,G)$&(11,64,84)&(11,53)&(0,0,31)\\
$(F,G)$&(7,28,28)&(7,21)&(0,0,7)
\end{tabular}
\end{equation}
\end{center}

The last 3 rows of (\ref{table-ext}) show that the forward Ext groups are
concentrated in degree $2$.  For later use, we will make use of their internal gradings.
Set
$$
f_{XY}(q)=\ch\Ext_{A}^{2}(X,Y),
\qquad
(X,Y)=(E,F),(E,G),(F,G).
$$
A direct computation gives the following enhancement of dimensions into internal characters:
\begin{equation}\label{equation-characters-f}
\begin{aligned}
f_{EF}(q)&=2+2q^{-1}+q^{-2},\\
f_{EG}(q)&=2q+7+9q^{-1}+8q^{-2}+4q^{-3}+q^{-4},\\
f_{FG}(q)&=q+3+2q^{-1}+q^{-2}.
\end{aligned}
\end{equation}

\section{Regrading}\label{section-regrading}

In this section, we introduce a regrading construction and apply it to the algebra $A$ (\ref{equation-def-A}) and the modules $E,F,G$ constructed
in subsection \ref{section-modules}. This produces the family of DG algebras and
residual categories appearing in Theorem~\ref{theorem-main}. We start with the general construction.

Let
$
V=\bigoplus_{p,w}V^{p,w}
$
be a bigraded complex, where $p$ denotes the cohomological degree and $w$ the
internal degree.  For $m\in\ZZ$, define the regrading $T_{m}V$ by
\begin{equation}\label{equation-regrading}
(T_{m}V)^{d}
=
\bigoplus_{p+2mw=d}V^{p,w}.
\end{equation}
Thus the bidegree $(p,w)$ is sent to the cohomological degree
$
p+2mw.
$
First, we note the compatibility conditions in the following lemma.
\begin{lemma}\label{lemma-regrading}
Let $V=\bigoplus_{p,w}V^{p,w}$ be a finite bigraded complex whose differential
has bidegree $(1,0)$.  Then $T_{m}$ is exact and preserves parity.  Moreover,
there are natural isomorphisms
$$
T_{m}(V\tensor Z)\cong T_{m}V\tensor T_{m}Z,
\qquad
T_{m}\Hom(V,Z)\cong\Hom(T_{m}V,T_{m}Z),
$$
together with analogous identifications for duals, shifts, and cones.
In particular, $T_{m}$ preserves quasi-isomorphisms.
\end{lemma}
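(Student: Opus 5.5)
The whole lemma follows from one observation: $T_{m}$ does not alter the underlying vector space or any map; it only relabels the bigraded pieces by the integer $d=p+2mw$. I would first check that $T_{m}$ is a functor on bigraded complexes with differential of bidegree $(1,0)$. If $\delta$ sends $V^{p,w}$ to $V^{p+1,w}$, then it sends $(T_{m}V)^{d}$ to $(T_{m}V)^{d+1}$, since $(p+1)+2mw=d+1$. So $T_{m}V$ is a complex with the same differential, and a morphism of bigraded complexes (bidegree $(0,0)$) is a chain map for the new grading. Cohomology commutes with $T_{m}$: the cohomology of $V$ is bigraded because $\delta$ preserves $w$, so
$$H^{d}(T_{m}V)=\bigoplus_{p+2mw=d}H^{p,w}(V).$$
This gives exactness at once, because a short exact sequence of bigraded complexes stays exact degreewise after relabelling. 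It also shows that $T_{m}$ preserves and reflects quasi-isomorphisms, since a map is a quasi-isomorphism iff it induces isomorphisms on every $H^{p,w}$. Parity is preserved because $d\equiv p \pmod 2$; this is exactly why the factor $2$ appears in $2mw$.

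For tensor products, $(V\tensor Z)^{p,w}=\bigoplus V^{p_{1},w_{1}}\tensor Z^{p_{2},w_{2}}$ with $p=p_{1}+p_{2}$ and $w=w_{1}+w_{2}$. The relabelling is additive:
$$p+2mw=(p_{1}+2mw_{1})+(p_{2}+2mw_{2}).$$
So the identity map on underlying spaces gives $T_{m}(V\tensor Z)\cong T_{m}V\tensor T_{m}Z$ as graded spaces. For the differentials, the Koszul sign $(-1)^{p_{1}}$ in $\delta(v\tensor z)=\delta v\tensor z+(-1)^{|v|}v\tensor\delta z$ agrees with $(-1)^{p_{1}+2mw_{1}}$ by parity preservation. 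The symmetry isomorphism, with sign $(-1)^{|v||z|}$, is compatible for the same reason.

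The same additivity handles the other constructions. A map of bidegree $(p,w)$ becomes a map of degree $p+2mw$, which gives the statement for $\Hom$, with the sign in $\delta f=\delta\circ f-(-1)^{|f|}f\circ\delta$ again matching by parity. Duals are the case $Z=k$ in bidegree $(0,0)$. A shift $V[1]$ moves bidegree by $(-1,0)$, so $T_{m}(V[1])=(T_{m}V)[1]$. Cones are built from shifts, direct sums and a morphism, so they commute with $T_{m}$ as well. Finiteness is used only so that every $(T_{m}V)^{d}$ is finite-dimensional and $\Hom$ is a genuine direct sum rather than a product; even without it the isomorphisms hold with the appropriate product conventions.

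I do not expect a real obstacle here. The only step needing care is the sign bookkeeping, and that is entirely discharged by parity preservation: every sign in the DG formalism depends only on cohomological degrees mod $2$, and these are unchanged by $T_{m}$.
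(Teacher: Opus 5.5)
Your proposal is correct and takes the same route as the paper, whose entire proof is the observation that the correction $2mw$ is even, so all Koszul signs are preserved; you spell out the degree bookkeeping for differentials, cohomology, tensor products, $\Hom$, shifts and cones that the paper leaves implicit. The one step worth stating explicitly is surjectivity of $T_{m}\Hom(V,Z)\to\Hom(T_{m}V,T_{m}Z)$: it uses finite-dimensionality of $V$ to split a map of total degree $d$ into finitely many bihomogeneous components, and this is the actual role of the finiteness hypothesis you mention.
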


\begin{proof}
 Since the correction $2mw$ is even, all Koszul
signs are preserved.
\end{proof}

\subsection{The family $A_{m}$ and $\cP_{m}$}

In this subsection, we apply the regrading (\ref{equation-regrading}) to $A$ and $(E,F,G)$, producing $A_{m}$ and $(E_{m},F_{m},G_{m})$ for every $m\in\ZZ$.

The algebra $A$ and modules $E,F,G$ have cohomological degree
0, together with the internal grading introduced in
Section~\ref{section-fixed-data}. So every homogeneous element initially has
bidegree $(0,w)$, and under $T_{m}$ it is regraded in cohomological degree $2mw$.
For $m\in\ZZ$, set
$$
A_{m}=T_{m}A,\qquad
E_{m}=T_{m}E,\qquad
F_{m}=T_{m}F,\qquad
G_{m}=T_{m}G.
$$
The arrow degrees in $A_{m}$ are
$$
\begin{array}{c|rrrrrrrrr}
&a_{1}&a_{2}&a_{3}&a_{4}&b_{1}&b_{2}&b_{3}&b_{4}&b_{5}\\ \hline
\deg_{A_{m}}&0&0&2m&-2m&0&0&-2m&-2m&-4m.
\end{array}
$$
The regraded representations of $(E, F, G)$ have underlying complexes
$$
\begin{array}{c|ccc}
 &X_{1}&X_{2}&X_{3}\\ \hline
E_{m}&k[-2m]\oplus k\oplus k[-4m]&k\oplus k[-2m]&k\\
F_{m}&k&k&k\\
G_{m}&k&k\oplus k[-2m]&k[-2m]\oplus k^{\oplus2}\oplus k[2m].
\end{array}
$$

\begin{lemma}\label{lemma-regraded-exceptional}
For every $m\in\ZZ$, the triple $(E_{m},F_{m},G_{m})$ is exceptional in
$\Perf(A_{m})$.
\end{lemma}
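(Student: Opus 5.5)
The plan is to transport the computation of Lemma~\ref{lemma-exceptional-triple} through the regrading functor $T_{m}$ of Lemma~\ref{lemma-regrading}, rather than recomputing anything.

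\emph{Step 1: regrade the resolution.} The bimodule resolution (\ref{equation-minimal-bimodule-resolution}) is built from internally graded pieces: $R$, $U$, $V$, $W$ are internally graded because the $\rho_{\ell}$ are homogeneous, and the splitting $R'$ was chosen graded. Hence every map in (\ref{equation-minimal-bimodule-resolution}) has bidegree $(0,0)$. Tensoring over $A$ with an internally graded module $X$ gives (\ref{equation-resolution-X}) as an exact sequence of bigraded $A$-modules with bidegree-$(0,0)$ maps.

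Applying $T_{m}$, which is exact and monoidal by Lemma~\ref{lemma-regrading}, produces an exact sequence of DG $A_{m}$-modules. All differentials are zero, so it is simply graded. Its terms are $A_{m}\otimes_{S}T_{m}(N)\otimes_{S}X_{m}$ for $N=k,U\oplus V,R$. Each is a finite sum of shifted free modules $A_{m}e_{v}[s]$, so it is K-projective. Totalizing this finite exact sequence gives a semifree resolution of $X_{m}$. In particular $X_{m}\in\Perf(A_{m})$, since it is an iterated cone of shifts of $A_{m}e_{v}$.

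\emph{Step 2: identify the morphism complexes.} Computing $\RHom_{A_{m}}(X_{m},Y_{m})$ with this resolution gives the totalization of the complex $\cK^{\bullet}(X,Y)$ of (\ref{equation-Ext-complex}), regraded. More precisely, $\cK^{p}(X,Y)$ carries its internal grading, with $d^{0}$ and $d^{1}$ of internal degree $0$ (they are built from the structure matrices, which are homogeneous). By the $\Hom$ compatibility in Lemma~\ref{lemma-regrading},
$$
\RHom_{A_{m}}(X_{m},Y_{m})\simeq T_{m}\,\mathrm{Tot}\,\cK^{\bullet}(X,Y),
$$
where $\cK^{p}$ sits in bidegree $(p,w)$.

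Because $T_{m}$ is exact and only relabels degrees, the cohomology satisfies $H^{d}=\bigoplus_{p+2mw=d}\Ext^{p}_{A}(X,Y)_{w}$. So $H^{\bullet}\RHom_{A_{m}}(X_{m},Y_{m})=T_{m}\Ext^{\bullet}_{A}(X,Y)$.

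\emph{Step 3: read off exceptionality from table (\ref{table-ext}).} For the pairs $(F,E)$, $(G,E)$, $(G,F)$ the Ext groups vanish, so the regraded complexes are acyclic. For $X=E,F,G$ we have $\Ext^{\bullet}_{A}(X,X)=k$, spanned by the identity. The identity has bidegree $(0,0)$, so after regrading it still sits in cohomological degree $0$. Hence $\RHom_{A_{m}}(X_{m},X_{m})\simeq k$ for each $X$, and the triple is exceptional in $\Perf(A_{m})$.

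\emph{Main obstacle.} The delicate point is Step 2. One must justify that the totalization of the regraded resolution is a genuine semifree (K-projective) resolution over the DG algebra $A_{m}$. One must also check that no hidden sign or degree issue arises: homogeneity of all structure maps keeps the differentials internally degree $0$. Lemma~\ref{lemma-regrading} controls signs, since the shift $2mw$ is even. The resolution is finite, so K-projectivity follows by induction on cones, and the rest is bookkeeping.
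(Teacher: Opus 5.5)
Your proposal is correct and follows the paper's own route. Like the paper, you carry the Ext complexes of Lemma~\ref{lemma-exceptional-triple} through $T_{m}$ using Lemma~\ref{lemma-regrading}, note that the identity has bidegree $(0,0)$ so each self-Ext stays $k$ in degree $0$, and observe that the backward Ext groups remain zero. You also supply details the paper leaves implicit: the regraded resolution is K-projective, so $X_{m}\in\Perf(A_{m})$, and $\RHom_{A_{m}}(X_{m},Y_{m})\simeq T_{m}\,\mathrm{Tot}\,\cK^{\bullet}(X,Y)$.
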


\begin{proof}
By Lemma~\ref{lemma-exceptional-triple}, $(E,F,G)$ is exceptional in
$\Perf(A)$. By Lemma~\ref{lemma-regrading}, the corresponding Ext complexes
are carried to those of $(E_{m},F_{m},G_{m})$ under regrading. The self-Ext class is generated by the identity map, which has bidegree $(0,0)$. So it remains in cohomological degree $0$ for any regrading. The 3 backward $\Ext$ groups remain zero. (The forward $\Ext$ groups need not remain in degree $2$.)
\end{proof}

 \subsection{The residual categories $\cP_{m}$}\label{section-common-model}

In this subsection, we define the residual categories $\cP_{m}$ and show that they are all obtained by regrading a single finite-dimensional bigraded DG algebra.  This gives a common model for the family that will be used in the Hochschild cohomology computation.

Regrading the standard full exceptional collection
$
\Perf(A)=\langle P_{3},P_{2},P_{1} \rangle
$ gives a full exceptional collection 
$$
\Perf(A_{m})=\langle P_{3}^{(m)},P_{2}^{(m)},P_{1}^{(m)} \rangle,
\qquad
P_{i}^{(m)}=A_{m}e_{i}.
$$  
By Lemma~\ref{lemma-regraded-exceptional},
$\langle E_{m},F_{m},G_{m} \rangle\subset \Perf(A_{m})$ is an exceptional collection. Set
$$
\cP_{m}=\langle E_{m},F_{m},G_{m}\rangle^{\perp}.
$$
We next show that all $\cP_{m}$ are obtained by regrading
a single bigraded DG algebra.

\begin{proposition}\label{proposition-common-model}
There exists a finite-dimensional bigraded DG algebra $B$, whose differential
has bidegree $(1,0)$, such that for every $m\in\ZZ$,
$$
\cP_{m}\simeq_{\mathrm{Morita}}\Perf(T_{m}B).
$$
\end{proposition}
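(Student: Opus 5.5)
We work in the bigraded (internally graded) world at $m$-independent level and only apply $T_{m}$ at the very end, using Lemma~\ref{lemma-regrading} to move everything across. Let $\cB$ denote the bigraded DG enhancement of $\Perf(A)$ given by bigraded perfect complexes of internally graded projective $A$-modules, with morphism complexes bigraded and differentials of bidegree $(1,0)$. Since $A$, $E$, $F$, $G$ are internally graded, they are objects of this category. The resolution (\ref{equation-resolution-X}) is internally graded, so their projective resolutions are too. The key point is that $T_{m}$ applied to this bigraded enhancement gives a DG enhancement of $\Perf(A_{m})$. Indeed, $T_{m}$ is compatible with tensor products and $\Hom$ by Lemma~\ref{lemma-regrading}. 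It sends $A$ to $A_{m}$, the projectives $Ae_{i}$ to $P_{i}^{(m)}$, and the resolutions of $E,F,G$ to resolutions of $E_{m},F_{m},G_{m}$.

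The first step is to produce a bigraded generator of the orthogonal $\langle E,F,G\rangle^{\perp}$. We take $P=A=P_{1}\oplus P_{2}\oplus P_{3}$ and apply the left mutation through the exceptional collection, i.e.\ the projection functor $\mathbb{L}=\mathbb{L}_{E}\mathbb{L}_{F}\mathbb{L}_{G}$ onto $\langle E,F,G\rangle^{\perp}$. This functor is built from cones of evaluation maps of the form $\RHom(X,-)\tensor X\to -$. All the complexes $\RHom(X,P_{i})$ and $\RHom(X,Y)$ are finite-dimensional bigraded complexes whose differentials have bidegree $(1,0)$, computed by (\ref{equation-Ext-complex}). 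The evaluation maps are homogeneous of internal degree $0$. Hence $\mathbb{L}(P)$ is a bigraded twisted complex built from $P_{i}$, $E$, $F$, $G$ with finitely many bigraded multiplicity spaces. Because $\Perf(A)$ is generated by $P$ and $\Perf(A)=\langle \langle E,F,G\rangle^{\perp},E,F,G\rangle$, the object $\mathbb{L}(P)$ classically generates the orthogonal.

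Next we set $B=\End(\mathbb{L}(P))$, computed as the endomorphism DG algebra in the bigraded enhancement, using the explicit finite projective resolutions. This $B$ is finite-dimensional: it is a direct sum of morphism spaces between finite-dimensional projective complexes over the $25$-dimensional algebra $A$. It is bigraded, with differential of bidegree $(1,0)$. By Lemma~\ref{lemma-regrading}, $T_{m}$ commutes with cones, shifts, tensor products and $\Hom$. Therefore $T_{m}\mathbb{L}(P)$ is the same twisted complex built from $P_{i}^{(m)},E_{m},F_{m},G_{m}$, with multiplicities $T_{m}\RHom(X,-)\cong\RHom(T_{m}X,T_{m}-)$. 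So it equals $\mathbb{L}_{E_{m}}\mathbb{L}_{F_{m}}\mathbb{L}_{G_{m}}(A_{m})$, which generates $\cP_{m}$ by Lemma~\ref{lemma-regraded-exceptional}. Moreover $T_{m}B\cong\End(T_{m}\mathbb{L}(P))$, so Keller's theorem gives $\cP_{m}\simeq\Perf(T_{m}B)$ up to Morita equivalence.

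The main obstacle is the claim that regrading commutes with the mutation functors at the chain level. This includes naturality of the evaluation morphism and the assertion that $T_{m}\RHom_{A}(X,Y)$ computes $\RHom_{A_{m}}(T_{m}X,T_{m}Y)$. The latter needs the bigraded projective resolution of $X$ to regrade to a cofibrant resolution of $T_{m}X$ over $A_{m}$. I would handle this concretely. The resolution (\ref{equation-resolution-X}) is a finite complex of internally graded summands of $A\tensor_{S}(-)$. Applying $T_{m}$ termwise gives a finite filtered (semifree) $A_{m}$-module, with the filtration given by the resolution degree, so it is K-projective. Quasi-isomorphism is preserved by Lemma~\ref{lemma-regrading}. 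With this, the whole construction is a formal consequence of the symmetric monoidal compatibility of $T_{m}$, since the $2mw$ shift is even and introduces no sign changes.
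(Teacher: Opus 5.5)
Your proposal is correct and is essentially the paper's proof. Like the paper, you project the generator $P_{1}\oplus P_{2}\oplus P_{3}$ to $\langle E,F,G\rangle^{\perp}$ via the mutations $\mathbb{L}_{E}\mathbb{L}_{F}\mathbb{L}_{G}$, take $B$ to be the endomorphism DG algebra of a finite internally graded projective model, and use the compatibility of $T_{m}$ with $\RHom$, tensor products, evaluation maps and cones to identify $T_{m}$ of this generator with the projection of $A_{m}$ to $\cP_{m}$. The differences are cosmetic: the paper takes $\End(T)^{\mathrm{op}}$ to match its left-module convention, and your justification that the regraded resolutions stay K-projective over $A_{m}$ is a step the paper leaves implicit.
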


\begin{proof}
We first take $B$ as the standard DG algebra model of $\cP_{0}$. Specifically, we let
$
\pi_{0}:\Perf(A)\longrightarrow\cP_{0}
$
be the projection associated with 
$\langle\cP_{0},E,F,G\rangle.$
Set
$T=\bigoplus_{i=1}^{3}\pi_{0}(P_{i})$.
Since $P_{1},P_{2},P_{3}$ generate $\Perf(A)$, their projections generate
$\cP_{0}$.  Hence $T$ is a classical generator of $\cP_{0}$. Choose a bounded finite internally graded projective representative of $T$
and set
$
B=\End_{A}^{\bullet}(T)^{\mathrm{op}}.
$
Then $B$ is a finite-dimensional bigraded DG algebra whose differential has
bidegree $(1,0)$, and
$
\cP_{0}\simeq_{\mathrm{Morita}}\Perf(B).
$

The projection $\pi_{0}$ is obtained by successive mutations through
$E,F,G$.  These mutations are constructed from $\RHom$, tensor products,
evaluation maps, and cones.  By Lemma~\ref{lemma-regrading}, the regrading
functor $T_{m}$ is compatible with these operations.  Therefore $T_{m}(T)$ is
the projection of
$
P_{1}^{(m)}\oplus P_{2}^{(m)}\oplus P_{3}^{(m)}
$
to $\cP_{m}$.  Since $P_{1}^{(m)},P_{2}^{(m)},P_{3}^{(m)}$ generate
$\Perf(A_{m})$, the object $T_{m}(T)$ generates $\cP_{m}$. Moreover,
$$
\RHom_{A_{m}}\bigl(T_{m}(T),T_{m}(T)\bigr)^{\mathrm{op}}
\cong
T_{m}B.
$$
It follows that
$
\cP_{m}\simeq_{\mathrm{Morita}}\Perf(T_{m}B)
$.
\end{proof}

\section{Hochschild cohomology}\label{section-height}
The goal of this section is to compute the Hochschild cohomology $\HoH^{*}(\cP_{m})$ for the regraded residual categories.  We first refine Hochschild cohomology by the internal grading, then compute the normal Hochschild terms needed for the residual category, and finally apply the regrading formula.

\subsection{Internal grading refinement}

Since we will deal with infinitely many regraded categories $\cP_{m}$, the goal of this subsection is to refine Hochschild cohomology by the internal grading, and prove the regrading formula that recovers ordinary Hochschild cohomology after applying $T_{m}$.

\begin{definition}
Let $C$ be a finite-dimensional smooth bigraded DG algebra. For cohomological degree
$p$ and internal weight $w$, define
$$
\HoH^{p,w}(C)
:=
H^{p}\!\left(\RHom_{C^{e}}(C,C)\right)_{w}.
$$
For the residual category $\cP_{0}$, we use the
model $B$ in Proposition~\ref{proposition-common-model} and set
$$
\HoH^{p,w}(\cP_{0}):=\HoH^{p,w}(B).
$$
\end{definition}
\begin{lemma}
\label{lemma-graded-perfectness}
Let
$
R=\bigoplus_{w\in\ZZ}R^{\bullet,w}
$
be a DG algebra equipped with an internal \(\ZZ\)-grading, such that
the differential has internal degree \(0\) and multiplication is homogeneous
for the internal grading. Let
$$
\operatorname{For}\colon D^{\mathrm{gr}}(R)\longrightarrow D(R)
$$
be the functor forgetting the internal grading.

If \(M\in D^{\mathrm{gr}}(R)\) and \(\operatorname{For}(M)\) is perfect in
\(D(R)\), then \(M\) is perfect in \(D^{\mathrm{gr}}(R)\).
\end{lemma}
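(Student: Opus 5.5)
We prove perfectness in the graded derived category by the usual compact-object argument, combined with the observation that forgetting the grading loses no information about compactness. Perfect objects of $D^{\mathrm{gr}}(R)$ are the compact objects. This category is compactly generated by the twists $R(w)$, $w\in\ZZ$, where $R(w)$ is $R$ with its internal grading shifted by $w$. So it suffices to show that $M$ is compact in $D^{\mathrm{gr}}(R)$. We will do this by comparing graded morphism spaces with ungraded ones.

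\textbf{Step 1 (adjunction).} The forgetful functor has a right adjoint
\[
\operatorname{For}(N)\longmapsto \bigoplus_{w\in\ZZ} N(w),
\]
that is, $\Hom_{D(R)}(\operatorname{For}M,\operatorname{For}N)\cong\Hom_{D^{\mathrm{gr}}(R)}\bigl(M,\bigoplus_{w}N(w)\bigr)$. This is checked on the level of DG modules. An ungraded morphism $\operatorname{For}M\to\operatorname{For}N$ decomposes into components of each internal degree $w$. Here one uses that $M$ and $N$ are direct sums of their weight pieces, and that the product over $w$ agrees with the direct sum once we restrict to maps landing in $\operatorname{For}(\bigoplus_w N(w))$ (a direct sum). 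One passes to derived categories using K-projective resolutions. A graded K-projective resolution of $M$ remains K-projective after forgetting the grading, because $\operatorname{For}$ preserves the generators $R(w)\mapsto R$ together with cones and sums.

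\textbf{Step 2 (compactness).} Let $\{N_i\}$ be a family in $D^{\mathrm{gr}}(R)$. Then
\[
\Hom_{\mathrm{gr}}\bigl(M,\textstyle\bigoplus_i N_i\bigr)
\]
is a direct summand, namely the weight-$0$ component, of
\[
\Hom_{D(R)}\bigl(\operatorname{For}M,\textstyle\bigoplus_i\operatorname{For}N_i\bigr)
=\bigoplus_w\Hom_{\mathrm{gr}}\bigl(M,\textstyle\bigoplus_i N_i(w)\bigr).
\]
The right-hand side equals $\bigoplus_i\Hom(\operatorname{For}M,\operatorname{For}N_i)$ because $\operatorname{For}M$ is compact. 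The identification is compatible with the weight decomposition, so taking weight-$0$ parts gives
\[
\Hom_{\mathrm{gr}}\bigl(M,\textstyle\bigoplus_iN_i\bigr)=\textstyle\bigoplus_i\Hom_{\mathrm{gr}}(M,N_i).
\]
Hence $M$ is compact in $D^{\mathrm{gr}}(R)$.

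\textbf{Step 3.} By Neeman's theorem, the compact objects of a compactly generated triangulated category are exactly the thick closure of the generators. Applied to the generators $R(w)$, this shows that $M$ lies in $\mathrm{thick}\{R(w)\}$, i.e.\ $M$ is perfect in $D^{\mathrm{gr}}(R)$.

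The main obstacle is Step 1: we need the weight decomposition of ungraded Hom spaces to hold in the derived category, with direct sums rather than products. Since only $\operatorname{For}M$ is assumed compact, infinitely many weights may appear. The way to handle this is to compute on a graded K-projective (cell) resolution of $M$ and to use that $\operatorname{For}$ commutes with arbitrary direct sums.
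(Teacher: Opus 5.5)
\textbf{Gap in Step 2.} The problem is the identification you use in Step 2 and attribute to Step 1:
\[
\Hom_{D(R)}(\operatorname{For}M,\operatorname{For}N)=\bigoplus_{w}\Hom_{D^{\mathrm{gr}}(R)}\bigl(M,N(w)\bigr).
\]
Your adjunction only gives $\Hom_{D(R)}(\operatorname{For}M,\operatorname{For}N)\cong\Hom_{D^{\mathrm{gr}}(R)}\bigl(M,\bigoplus_{w}N(w)\bigr)$, with the sum \emph{inside} the Hom. Moving it outside says that $\Hom_{D^{\mathrm{gr}}(R)}(M,-)$ commutes with the coproduct $\bigoplus_{w}N(w)$. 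That is a special case of the graded compactness you are trying to prove.

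Your suggested remedy does not supply it. A graded cell resolution $P$ of $M$ may have infinitely many cells, since only $\operatorname{For}M$ is known to be compact. So a weight-zero chain map $P\to\bigoplus_w N(w)$ can have infinitely many nonzero components $P\to N(w)$. The fact that $\operatorname{For}$ commutes with sums merely re-derives the adjunction. As written, Step 2 rests on a claim essentially as strong as the lemma itself.

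\textbf{Repair.} You do not need the full weight decomposition, only the weight-zero retraction. Define
\[
r_N\colon\Hom_{D(R)}(\operatorname{For}M,\operatorname{For}N)\to\Hom_{D^{\mathrm{gr}}(R)}(M,N)
\]
as the adjunction isomorphism followed by the graded projection $\bigoplus_w N(w)\to N(0)=N$, which is a map of internally graded DG modules. Then $r_N\circ\operatorname{For}=\mathrm{id}$, and $r_N$ is natural in graded $N$. Consequently the graded comparison map $\bigoplus_i\Hom_{\mathrm{gr}}(M,N_i)\to\Hom_{\mathrm{gr}}(M,\bigoplus_iN_i)$ is a retract of the ungraded comparison map for $\operatorname{For}M$. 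That map is an isomorphism, so the graded one is too. Concretely, if $\operatorname{For}\alpha=\iota_J\beta$, then $\alpha=r(\operatorname{For}\alpha)=\iota_J\,r(\beta)$. The weight decomposition you wanted then follows a posteriori.

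\textbf{Comparison with the paper.} With this repair your proof is correct, and it is a categorical version of the paper's argument. The paper represents $\alpha$ by a graded chain map $a$ on a semi-free resolution. It factors $\operatorname{For}\alpha$ up to a homotopy $h$, then takes the internal-degree-zero components $b_0,h_0$; this is exactly your $r$ at chain level. Your route handles injectivity and surjectivity at once. It also makes the compact $\Rightarrow$ perfect step explicit (Neeman, with generators $R(w)$), which the paper leaves implicit. The paper's route avoids setting up the derived adjunction and checking that $\operatorname{For}$ preserves K-projectives.
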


\begin{proof}
Since \(\operatorname{For}(M)\) is perfect in \(D(R)\), it is compact;
see for example \cite[\S5]{KellerDerivingDG}. We first show that \(M\)
is compact in \(D^{\mathrm{gr}}(R)\).

Let \(\{N_{i}\}_{i\in I}\) be a family of internally graded DG \(R\)-modules.
We need to show that the natural map
\begin{equation}
\label{equation-graded-compactness-map}
\bigoplus_{i\in I}
\Hom_{D^{\mathrm{gr}}(R)}(M,N_{i})
\longrightarrow
\Hom_{D^{\mathrm{gr}}(R)}
\left(M,\bigoplus_{i\in I}N_{i}\right)
\end{equation}
is an isomorphism. Injectivity is immediate by composing with the projections
$$
\bigoplus_{i\in I}N_{i}\longrightarrow N_{i}.
$$

For surjectivity of (\ref{equation-graded-compactness-map}), choose a homogeneous
semi-free resolution
$
P\longrightarrow M
$
in the category of internally graded DG \(R\)-modules. Then \(P\) is
h-projective in the graded category. After forgetting the internal grading,
\(\operatorname{For}(P)\) is still semi-free, hence h-projective, and
$
\operatorname{For}(P)\longrightarrow \operatorname{For}(M)
$
is an h-projective resolution. Let
$$
\alpha\colon M\longrightarrow\bigoplus_{i\in I}N_{i}
$$
be a morphism in \(D^{\mathrm{gr}}(R)\). Since \(P\) is h-projective,
\(\alpha\) is represented by an \(R\)-linear chain map
$$
a\colon P\longrightarrow\bigoplus_{i\in I}N_{i}
$$
of internal degree \(0\). Since \(\operatorname{For}(M)\) is compact in \(D(R)\), the morphism
\(\operatorname{For}(\alpha)\) factors through a finite subcoproduct.
Thus there exist a \emph{finite} subset \(J\subset I\) and a morphism
$$
\beta\colon \operatorname{For}(M)
\longrightarrow
\bigoplus_{j\in J}\operatorname{For}(N_{j})
$$
in \(D(R)\) such that the composition
$$
\operatorname{For}(M)
\xrightarrow{\beta}
\bigoplus_{j\in J}\operatorname{For}(N_{j})
\xrightarrow{\iota_{J}}
\bigoplus_{i\in I}\operatorname{For}(N_{i})
$$
is equal to \(\operatorname{For}(\alpha)\). Since \(\operatorname{For}(P)\) is h-projective, we may represent
\(\beta\) by an \(R\)-linear chain map
$$
b\colon P\longrightarrow\bigoplus_{j\in J}N_{j}.
$$
Viewing \(a\) and \(\iota_{J}b\) as ungraded chain maps, they represent the
same morphism in \(D(R)\). Hence they are homotopic: there exists an
\(R\)-linear homotopy
\begin{equation}
\label{equation-graded-homotopy}
h\colon
P\longrightarrow
\bigoplus_{i\in I}N_{i}[-1],\qquad a-\iota_{J}b=dh+hd.
\end{equation}

We now take the internal-degree-zero components of \(b\) and \(h\).
For a homogeneous element \(x\in P^{\bullet,w}\), define
$$
b_{0}(x):=\pr_{w}\bigl(b(x)\bigr),
\qquad
h_{0}(x):=\pr_{w}\bigl(h(x)\bigr),
$$
where \(\pr_{w}\) denotes projection to internal weight
\(w\). Since the differential has internal degree \(0\) and the
\(R\)-action is homogeneous, \(b_{0}\) is an \(R\)-linear chain map of
internal degree \(0\), and \(h_{0}\) is an \(R\)-linear homotopy of internal
degree \(0\). Taking the internal-degree-zero component of
(\ref{equation-graded-homotopy}) gives
$$
a-\iota_{J}b_{0}=dh_{0}+h_{0}d.
$$
Hence \(\alpha\) factors in \(D^{\mathrm{gr}}(R)\) through 
$
\bigoplus_{j\in J}N_{j}.
$
Therefore (\ref{equation-graded-compactness-map}) is surjective, and \(M\) is
compact in \(D^{\mathrm{gr}}(R)\).
\end{proof}

\begin{proposition}\label{proposition-HH-regrading}
Let $C$ be a finite-dimensional smooth bigraded DG algebra whose
differential has bidegree $(1,0)$.  Then, for every $m\in\ZZ$, there are
natural isomorphisms
$$
\HoH^{d}(T_{m}C)\cong
\bigoplus_{p+2mw=d}\HoH^{p,w}(C),
$$
where the right-hand side is computed from any finite homogeneous
$C^{e}$-projective resolution of $C$.
\end{proposition}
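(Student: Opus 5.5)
The plan is to compute Hochschild cohomology of $T_{m}C$ from a bigraded bimodule resolution of $C$, regraded by $T_{m}$.

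\emph{Step 1: a homogeneous resolution exists.} Since $C$ is smooth, $C$ is perfect as an ungraded $C^{e}$-module. Apply Lemma~\ref{lemma-graded-perfectness} with $R=C^{e}$. The enveloping algebra inherits the internal grading with differential of internal degree $0$. The lemma shows $C$ is perfect in $D^{\mathrm{gr}}(C^{e})$. Hence there is a finite homogeneous semi-free (or retract of semi-free) resolution $P\to C$ by internally graded $C^{e}$-modules, which is a quasi-isomorphism of internal degree $0$. Forgetting the grading, $P$ is still h-projective and resolves $C$. Also, $\Hom_{C^{e}}(P,C)$ splits as a direct sum over internal weights $w$, because $P$ is finitely generated by homogeneous generators. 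Hence its cohomology splits as $\bigoplus_{w}H^{p}(\ldots)_{w}$. This recovers $\HoH^{p,w}(C)$, and by the usual comparison of homogeneous resolutions it is independent of the choice of $P$.

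\emph{Step 2: regrade the resolution.} By Lemma~\ref{lemma-regrading}, $T_{m}$ is exact, monoidal and preserves Koszul signs because the shift $2mw$ is even. Therefore $T_{m}(C^{e})\cong (T_{m}C)^{e}$, and $T_{m}P$ is a DG $(T_{m}C)^{e}$-module. Moreover $T_{m}P\to T_{m}C$ is a quasi-isomorphism, since $T_{m}$ preserves quasi-isomorphisms. We also need $T_{m}P$ to remain semi-free, or at least h-projective. The point is that a finite semi-free filtration by shifts of $C^{e}$ with homogeneous generators of bidegree $(p,w)$ becomes a finite semi-free filtration by shifts $(T_{m}C)^{e}[-(p+2mw)]$. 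Finiteness matters here: the regrading changes cohomological degrees unboundedly in $w$, but there are only finitely many generators, so no convergence issue arises.

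\emph{Step 3: compare Hom complexes.} The identification $T_{m}\Hom(V,Z)\cong\Hom(T_{m}V,T_{m}Z)$ from Lemma~\ref{lemma-regrading} restricts to $C^{e}$-linear maps, since linearity is a homogeneous condition. This gives
$$\Hom_{(T_{m}C)^{e}}(T_{m}P,T_{m}C)\cong T_{m}\Hom_{C^{e}}(P,C)$$
as complexes. Taking $H^{d}$ and using exactness of $T_{m}$ yields
$$\HoH^{d}(T_{m}C)\cong\bigoplus_{p+2mw=d}H^{p}(\Hom_{C^{e}}(P,C))_{w}=\bigoplus_{p+2mw=d}\HoH^{p,w}(C).$$
Naturality follows from functoriality in $P$.

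The main obstacle is Step 1 together with the finiteness in Step 2: the Hom-compatibility of $T_{m}$ is only valid when Hom is computed weight-by-weight on finitely generated objects. I would handle this by using that compact objects of $D^{\mathrm{gr}}(C^{e})$ are retracts of finite cell modules. Since $T_{m}$ commutes with idempotent splitting, taking retracts is harmless.
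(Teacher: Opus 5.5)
Your proposal is correct and follows the paper's route. Like the paper, you use Lemma~\ref{lemma-graded-perfectness} with $R=C^{e}$ to get a finite internally graded projective resolution, regrade it via Lemma~\ref{lemma-regrading} using $T_{m}(C^{e})\cong(T_{m}C)^{e}$, and take cohomology of $\Hom_{(T_{m}C)^{e}}(T_{m}P,T_{m}C)\cong T_{m}\Hom_{C^{e}}(P,C)$.

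Your added remarks spell out points the paper's short proof leaves implicit: $T_{m}P$ stays semi-free because it has finitely many homogeneous generators, finiteness is what makes the Hom complex a direct sum over weights, and going from graded compactness to a finite model only costs a retract.
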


\begin{proof}
By Lemma \ref{lemma-graded-perfectness}, we may choose an \emph{internally graded} finite
$C^{e}$-projective resolution $K\to C$. The (bigraded) complex
$\Hom_{C^{e}}(K,C)$
computes $\HoH^{\bullet}(C)$, with the second grading recording internal
weight. Lemma~\ref{lemma-regrading}
gives an isomorphism of complexes
$$
\Hom_{(T_{m}C)^{e}}(T_{m}K,T_{m}C)
\cong T_{m}\Hom_{C^{e}}(K,C).
$$
Here $T_{m}(C^{e})$ is identified with $(T_{m}C)^{e}$.
Taking cohomology and then the degree-$d$ summand gives the formula.
\end{proof}

\subsection{Projective resolutions and inverse-Serre Ext groups}

The main purpose of this subsection is to compute the $\Ext$ groups needed in the normal Hochschild spectral sequence, together with their internal graded characters.

Set $\cB=\langle E,F,G\rangle\subset\Perf(A)$. Since $\cB$ is generated by an exceptional collection, it is admissible, and $\cB^{\perp}=\cP_{0}$.
Recall that Kuznetsov's normal Hochschild triangle~\cite{KuznetsovHeight} gives
\begin{equation}\label{equation-NHH-triangle}
\NHH^{\bullet}(\cB,A)\longrightarrow
\HoH^{\bullet}(A)\longrightarrow
\HoH^{\bullet}(\cB^{\perp}).
\end{equation}
The reduced bar spectral sequence for $\NHH$ has
\begin{equation}\label{equation-NHH-ss}
E_{1}^{-\ell,q}
=
\bigoplus_{
i_{0}<\cdots<i_{\ell}
}
\bigoplus_{k_{0}+\cdots+k_{\ell}=q}
\Ext^{k_{0}}(X_{i_{0}},X_{i_{1}})\tensor\cdots\tensor
\Ext^{k_{\ell-1}}(X_{i_{\ell-1}},X_{i_{\ell}})\tensor
\Ext^{k_{\ell}}(X_{i_{\ell}},S_{A}^{-1}X_{i_{0}}).
\end{equation}
Since we want to obtain the Hochschild cohomology refined by internal gradings, the $\Ext$ dimensions are upgraded to internal characters.

Recall from Section~\ref{section-algebra} that $U=e_{2}Qe_{1}$ is the arrow space of
$1\to 2$, and $V=e_{3}Qe_{2}$ is the arrow space of $2\to3$. Let $R\subset V\tensor U$ be the subspace spanned by the relations on $Q$, and $W=(V\tensor U)/R$ is the space of surviving paths of $1\to3$.
For any representation $X\in\{E,F,G\}$, define the following evaluation maps
$$
\alpha_{X}:U\tensor X_{1}\longrightarrow X_{2},
\quad
a_{i}\tensor x\longmapsto A_{i}^{X}x, \qquad \gamma_{X}:W\tensor X_{1}\longrightarrow X_{3}, \quad b_{j}a_{i}\otimes x\mapsto B^{X}_{j}A_{i}^{X} x.
$$
Let $\pi:V\tensor U\twoheadrightarrow W$ be the quotient map. Define
$$
L_{X}=\ker\alpha_{X},
\qquad
\beta_{X}=(\pi\tensor1_{X_{1}})|_{V\tensor L_{X}},
\qquad
N_{X}=\ker\beta_{X}.
$$
Thus $L_{X}$ records relations among the generators coming from arrows
$1\to2$, and $N_{X}$ records the next syzygies. By concrete linear algebra, we record the following table of data:
$$
\begin{array}{c|ccccc}
X&\rk\alpha_{X}&\rk\gamma_{X}&\rk\beta_{X}
&\dim L_{X}&\dim N_{X}\\ \hline
E&2&1&38&10&12\\
F&1&1&12&3&3\\
G&2&4&9&2&1.
\end{array}
$$
Consequently, for each $X\in\{E,F,G\}$ there is a projective resolution
$$
0\longrightarrow P_{3}\tensor N_{X}
\longrightarrow P_{2}\tensor L_{X}
\longrightarrow P_{1}\tensor X_{1}
\longrightarrow X\longrightarrow0,
\qquad P_{i}=Ae_{i}.
$$
The corresponding projective and
injective resolutions have the numerical data:
$$
\begin{array}{c|ccc|ccc}
X&P_{1}^{0}&P_{2}^{-1}&P_{3}^{-2}&I_{3}^{0}&I_{2}^{1}&I_{1}^{2}\\ \hline
E&3&10&12&1&3&2\\
F&1&3&3&1&4&4\\
G&1&2&1&4&18&21.
\end{array}
$$

Applying these resolutions to
$\RHom_{A}(X,S_{A}^{-1}Y)$ produces finite complexes concentrated in
cohomological degrees $2,3,4$. Concrete linear algebra gives the following table of data:
\begin{equation}\label{table-inverse-serre}
\begin{array}{c|c|c|c|c}
X&Y&(\dim C^{2},\dim C^{3},\dim C^{4})
&(\rk d^{2},\rk d^{3})&\dim\Ext^{4}(X,S_{A}^{-1}Y)\\ \hline
E&E&(48,260,312)&(48,212)&100\\
F&F&(19,108,156)&(19,89)&67\\
G&G&(61,258,273)&(61,197)&76\\
F&E&(14,69,78)&(14,55)&23\\
G&E&(9,31,26)&(9,22)&4\\
G&F&(13,52,52)&(13,39)&13.
\end{array}
\end{equation}
So every inverse-Serre complex needed below has cohomology concentrated in degree 4.
The internal grading refines these dimensions. For the 6 pairs above,
set
$$
v_{XY}(q)=\ch\Ext^{4}_{A}(X,S_{A}^{-1}Y).
$$
Then we have the following character polynomials:
\begin{equation}\label{equation-characters-v}
\begin{array}{rl}
v_{EE}&=q^{-4}+6q^{-3}+15q^{-2}+23q^{-1}+25+19q+9q^{2}+2q^{3},\\
v_{FF}&=q^{-4}+5q^{-3}+11q^{-2}+16q^{-1}+16+12q+5q^{2}+q^{3},\\
v_{GG}&=q^{-4}+5q^{-3}+12q^{-2}+18q^{-1}+19+14q+6q^{2}+q^{3},\\
v_{FE}&=q^{-2}+4q^{-1}+6+7q+4q^{2}+q^{3},\\
v_{GE}&=1+2q+q^{2},\\
v_{GF}&=q^{-2}+3q^{-1}+4+4q+q^{2}.
\end{array}
\end{equation}
Forgetting the internal grading is equivalent to setting $q=1$. It can be checked that this is consistent with the dimension table: for
example, $v_{EE}(1)=100$. Note that the
characters of tensor products multiply.

\subsection{Normal Hochschild cohomology and height}
Gathering the ingredients computed above, we now use (\ref{equation-NHH-ss}) to prove Theorem \ref{theorem-main}.

In the spectral sequence (\ref{equation-NHH-ss}), since there are only 3
objects, we have $\ell=0,1,2$. By (\ref{table-ext}) and (\ref{table-inverse-serre}), the only nonzero positions are the following:
$$
\begin{array}{c|c|c}
\ell&\text{type of tensor product}&(-\ell,q)\\ \hline
0&\Ext^{4}(X_{i},S_{A}^{-1}X_{i})&(0,4)\\
1&\Ext^{2}(X_{i},X_{j})\tensor\Ext^{4}(X_{j},S_{A}^{-1}X_{i})&(-1,6)\\
2&\Ext^{2}(E,F)\tensor\Ext^{2}(F,G)\tensor\Ext^{4}(G,S_{A}^{-1}E)&(-2,8).
\end{array}
$$
There are no possible nonzero differentials between these positions, so
the spectral sequence degenerates at $E_{1}$.  Thus
$$
\NHH^{t}(\cB,A)=0\qquad(t\notin\{4,5,6\}),
\qquad
\dim_{k}\NHH^{4}(\cB,A)=100+67+76=243,
$$
and the height is 4.

Keeping track of the internal grading in the three surviving
$E_{1}$-terms gives
\begin{equation}
    \begin{array}{rl}
\ch \NHH^{4,*}(\cB,A)
=&
v_{EE}+v_{FF}+v_{GG},\\
\ch \NHH^{5,*}(\cB,A)
=&
f_{EF}v_{FE}+f_{EG}v_{GE}+f_{FG}v_{GF},
\\
\ch \NHH^{6,*}(\cB,A)
=&
f_{EF}f_{FG}v_{GE}.
\end{array}
\end{equation}

For completeness, the Hochschild cochain complex of the ambient algebra
$A$ is quasi-isomorphic to the following complex
$$
C^{0}=k^{3}\xrightarrow{\delta^{0}}
C^{1}=\End(U)\oplus\End(V)\xrightarrow{\delta^{1}}
C^{2}=\Hom(R,W),
$$
where
$$
\delta^{1}(P,Q)(r)=\pi\bigl((Q\tensor1_{U}+1_{V}\tensor P)r\bigr).
$$
By concrete linear algebra, we have 
\begin{equation}\label{equation-HH-012}
\begin{array}{l}
\HoH^{0,0}(A)\cong k, \quad
\HoH^{2,-4}(A)\cong k, \quad
\HoH^{2,3}(A)\cong k,
\\
\HoH^{0,w}(A)=\HoH^{1,w}(A)=0, ~\forall w\neq0.
\end{array}
\end{equation}
The full block table is recorded in a separate computational companion file.

Since the Hochschild complex of $A$ is concentrated in degrees
$0,1,2$, the long exact sequence induced by (\ref{equation-NHH-triangle}) gives
\begin{equation}\label{equation-HH-P0}
\HoH^{p,w}(\cP_{0})\cong
\begin{cases}
\HoH^{p,w}(A),&p=0,1,2,\\
\NHH^{p+1,w}(\cB,A),&p=3,4,5,\\
0,&\text{otherwise}.
\end{cases}
\end{equation}
Therefore we have
\begin{equation}
\begin{array}{rl}
\ch \HoH^{3,*}(\cP_{0})=&v_{EE}+v_{FF}+v_{GG},
\\
\ch \HoH^{4,*}(\cP_{0})=&f_{EF}v_{FE}+f_{EG}v_{GE}+f_{FG}v_{GF},
\\
\ch \HoH^{5,*}(\cP_{0})=&f_{EF}f_{FG}v_{GE}.
\end{array}
\end{equation}

\begin{lemma}\label{lemma-HH-endpoints}
For
$\cP_{0}=\langle E,F,G\rangle^{\perp}\subset\Perf(A)$
we have
$$
\HoH^{p,w}(\cP_{0})=0
\qquad\text{unless}\qquad
0\leq p\leq5,\quad -4\leq w\leq3.
$$
Moreover, we have
$$
\HoH^{0,0}(\cP_{0})\cong k,
\qquad
\HoH^{2,-4}(\cP_{0})\cong k,
\qquad
\HoH^{2,3}(\cP_{0})\cong k,
$$
$$
\HoH^{5,-4}(\cP_{0})\cong k,
\qquad
\HoH^{5,3}(\cP_{0})\cong k^{2},
$$
and if $w<0$, then $\HoH^{p,w}(\cP_{0})=0$ for $p<2$.
\end{lemma}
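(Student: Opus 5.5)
The plan is to read everything off the case description (\ref{equation-HH-P0}) together with the internal characters already recorded. For $p=0,1,2$, (\ref{equation-HH-P0}) gives $\HoH^{p,w}(\cP_{0})\cong\HoH^{p,w}(A)$. The facts in (\ref{equation-HH-012}) then settle almost all of these degrees.

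\begin{itemize}
\item They give $\HoH^{0,0}\cong k$, $\HoH^{2,-4}\cong k$ and $\HoH^{2,3}\cong k$.
\item They show that $\HoH^{0,w}$ and $\HoH^{1,w}$ vanish for $w\neq0$. In particular, this yields the last assertion that $\HoH^{p,w}(\cP_{0})=0$ for $p<2$ and $w<0$.
\item The only remaining point in this range is the internal weight range for $p\le 2$. I would get it from the explicit three-term cochain complex $C^{0}\to C^{1}\to C^{2}$. Its internal weights are bounded by those of $\End(U)$, $\End(V)$ and $\Hom(R,W)$.
\end{itemize}

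From the characters $\ch(R)=2+q+3q^{-1}+q^{-2}$ and $\ch(W)=q+4+4q^{-1}+3q^{-2}+q^{-3}$, the weights of $\Hom(R,W)=R^{*}\otimes W$ lie in $[-4,3]$. The weights of $\End(U)$ and $\End(V)$ lie in $[-2,2]$. Hence $\HoH^{p,w}(A)=0$ for $w\notin[-4,3]$. The vanishing for $p<0$ is clear since the complex starts in degree $0$.

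For $p=3,4,5$ I would use the three character identities displayed just before the lemma and expand the products.

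\begin{itemize}
\item For $p=3$: the sum $v_{EE}+v_{FF}+v_{GG}$ has exponents in $[-4,3]$.
\item For $p=4$: the products $f_{EF}v_{FE}$, $f_{EG}v_{GE}$ and $f_{FG}v_{GF}$ have exponent ranges $[-4,3]$, $[-4,3]$ and $[-4,3]$ respectively, which I would check by adding the extreme exponents in each product.
\item For $p=5$: the character is $f_{EF}f_{FG}v_{GE}=(2+2q^{-1}+q^{-2})(q+3+2q^{-1}+q^{-2})(1+2q+q^{2})$. Its lowest term is $q^{-2}\cdot q^{-2}\cdot 1=q^{-4}$ with coefficient $1$, giving $\HoH^{5,-4}\cong k$. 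Its highest term is $2\cdot q\cdot q^{2}=2q^{3}$, giving $\HoH^{5,3}\cong k^{2}$. All intermediate exponents lie in $[-4,3]$.
\end{itemize}

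Vanishing for $p>5$ is part of (\ref{equation-HH-P0}).

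The only genuinely delicate step is the weight bound for $\HoH^{2,w}(A)$ and $\HoH^{1,w}(A)$. The bound via $\Hom(R,W)$ already gives $[-4,3]$, so no cancellation argument is needed. I therefore expect the proof to consist of bookkeeping of extreme exponents, citing the companion computation only for (\ref{equation-HH-012}).
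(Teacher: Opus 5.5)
Your proposal is correct and follows the paper's own argument. The paper likewise reads off the cases $p=3,4,5$ from (\ref{equation-HH-P0}) together with the characters (\ref{equation-characters-f}) and (\ref{equation-characters-v}), and the cases $p\le 2$ from (\ref{equation-HH-012}). Your weight bound via $C^{2}=\Hom(R,W)$, with weights in $[-4,3]$, also makes explicit the vanishing of $\HoH^{2,w}(\cP_{0})$ for $w\notin[-4,3]$, which (\ref{equation-HH-012}) as stated does not record.
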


\begin{proof}
When $p=3,4,5$, the claim follows from (\ref{equation-HH-P0}), (\ref{equation-characters-f}) and (\ref{equation-characters-v}). The other claims follow from (\ref{equation-HH-012}). 
\end{proof}

\begin{proof}[Proof of Theorem~\ref{theorem-main}]
By Lemma \ref{lemma-regraded-exceptional}, $\langle E_{m}, F_{m}, G_{m} \rangle $ is an exceptional collection of length 3. Since $A_{m}$ is smooth and proper, the exceptional collection is
admissible, so the residual category $\cP_{m}$ is smooth and proper. Recall that $(P_{3}^{(m)},P_{2}^{(m)},P_{1}^{(m)})$ is a full exceptional collection, so $K_{0}(A_{m})\cong\ZZ^{3}$ and $\HoH_*(A_{m})\cong k^{3}$. The Euler matrices of both the standard collection and $\langle E_{m},F_{m},G_{m}\rangle$ are upper triangular with diagonal entries $1$. Hence, if $M$ is the matrix of $[E_{m}],[F_{m}],[G_{m}]$ in the projective basis, then $1=\det(M)^{2}$, so $M$ is unimodular. Thus $[E_{m}],[F_{m}],[G_{m}]$ form a basis of $K_{0}(A_{m})$, and $K_{0}(\cP_{m})=0$ by additivity of $K_{0}(-)$. Similarly, $\HoH_{*}(\langle E_{m},F_{m},G_{m}\rangle)\cong k^{3}$, so additivity of $\HoH_{*}(-)$ gives $\HoH_{*}(\cP_{m})=0$. To see that $\cP_{m}$ is a phantom category, it remains to show $\cP_{m} \neq 0$.

We now compute the Hochschild cohomology.
Proposition~\ref{proposition-HH-regrading} gives
$$
\HoH^{d}(\cP_{m})
\cong
\bigoplus_{p+2mw=d}\HoH^{p,w}(\cP_{0}).
$$
Suppose first that $m>0$. Every class with negative internal degree has $p\geq2$. For
$-3\leq w<0$, we have 
$$
p+2mw\geq2-6m>2-8m.
$$
Hence the unique extreme bidegree $(2,-4)$ gives
$$
\min(j : \HoH^{j}(\cP_{m})\neq 0)=2-8m,
\qquad
\HoH^{2-8m}(\cP_{m})\cong k.
$$
Similarly, every class has $p\leq5$ and $w\leq3$, while
$\HoH^{5,3}(\cP_{0})\cong k^{2}$.  Therefore
$$
\max(j:\HoH^{j}(\cP_{m})\neq 0)=5+6m,
\qquad
\HoH^{5+6m}(\cP_{m})\cong k^{2}.
$$

For $m=0$, Lemma~\ref{lemma-HH-endpoints} gives vanishing outside $[0,5]$.
Moreover, $\HoH^{0,0}(\cP_{0})\cong k$ and there are no other weight components
in cohomological degree $0$, so
$\HoH^{0}(\cP_{0})\cong k$.
For $\HoH^{5}$, we have
$$
\dim_{k}\HoH^{5}(\cP_{0})
=f_{EF}(1)f_{FG}(1)v_{GE}(1)=5\cdot7\cdot4=140,
$$
so $\HoH^{5}(\cP_{0})\cong k^{140}$.

Finally, suppose that $m<0$. Since $0\leq p\leq5$ and $-4\leq w\leq3$, and
$\HoH^{0,w}(\cP_{0})=\HoH^{1,w}(\cP_{0})=0$ for $w\neq0$, every regraded class lies in
$[2+6m,5-8m]$. By Lemma~\ref{lemma-HH-endpoints},
$\HoH^{2,3}(\cP_{0})\cong k$ and $\HoH^{5,-4}(\cP_{0})\cong k$.
The bidegrees $(2,3)$ and $(5,-4)$ are the unique nonzero bidegrees mapping
to the 2 endpoints. Hence
$$
\HoH^{2+6m}(\cP_{m})\cong k,
\qquad
\HoH^{5-8m}(\cP_{m})\cong k.
$$
Finally, the ordered pairs of extremal nonzero Hochschild degrees are distinct for distinct $m$: first of all if the 2 indices have the same sign, the pairs are
$(2-8m,5+6m)$ for $m>0$, $(0,5)$ for $m=0$, and $(2+6m,5-8m)$ for $m<0$, hence they are distinct. If the 2 indices have different signs, then we have $8m=-6n$ and $6m=-8n$, hence $m=n=0$. So the categories $\cP_{m}$ are pairwise non-derived-Morita-equivalent.
\end{proof}

\section{Reproducibility}
The purpose of this section is to record how the explicit linear-algebra computations used above can be independently reproduced and checked.

The codes file accompanying the working version contains the
relation matrices, module matrices, internal degrees, expected rank
tables, the complete bigraded Hochschild table, and exact verification
routines.  All rank statements in the
main text are stated over $\QQ$ and may be checked by row
reduction over $\QQ$.  For speed and compact certification, the companion code also
performs the same rank checks modulo $101$ and includes the elementary
lemmas that lift those certificates back to characteristic 0. No
floating-point linear algebra is used.  The complete bigraded Hochschild table is retained in the computational
companion but is not needed for Theorem~\ref{theorem-main}.

\bibliographystyle{emss}
\bibliography{ref}

\end{document}